\newcommand{\area}{\mathcal A}
\newcommand{\axialcoordofcylinder}{{w_1}}
\newcommand{\BallR}{{\Omega}}
\newcommand{\Balleps}{{\sourcedisk_{r_k}}}
\newcommand{\Coneeps}{{{C}_k}}
\newcommand{\deltaeps}{\delta_ k}
\newcommand{\dirdatum}{\varphi}
\newcommand{\domalaa}{X_{2\longR}^{\rm conv}}
\newcommand{\doubledrectangle} {R_{2\longR}}
\newcommand{\FB}{{\mathcal F}_{2\longR}}
\newcommand{\FBl}{{\mathcal F}_\longR}
\newcommand\grad{\nabla}
\newcommand{\h}{h}
\newcommand{\Hone}{\mathcal{H}^1}
\newcommand{\Htwo}{\mathcal{H}^2}
\newcommand{\Hspace}{\mathcal H_{2l}}
\newcommand{\hstar}{{\h^{\star}}}
\newcommand{\hstark}{{\h^{\star}_k}}
\newcommand{\invr}{H_k}
\newcommand{\invtheta}{\Theta_k}
\newcommand{\jump}[1]{\text{{\rm \textlbrackdbl}}{#1}\text{{\rm \textrbrackdbl}}}
\newcommand{\Lip}{{\rm Lip}}
\newcommand{\longR}{l}
\newcommand{\M}{\mathcal{M}}
\newcommand{\nada}[1]{}
\def\NN{\mathbb{N}}
\newcommand{\OmAlaa}{\doubledrectangle}
\newcommand{\Omegah}{\subgraph_\h}
\newcommand{\Omegahstar}{\subgraph_{\hstar}}
\newcommand{\Omegahstark}{\subgraph_{\hstark}}
\newcommand{\Om}{\Omega}
\newcommand{\psione}{\psi}
\newcommand{\psionestar}{\psi^\star}
\newcommand{\psionekstar}{\psi^\star_k}
\newcommand{\partialbar}{ \partial_D}
\newcommand{\R}{\mathbb{R}}
\newcommand{\relarea}{\overline \area}
\newcommand{\reps}{{r_k}}
\newcommand{\res}{\mathop{\hbox{\vrule height 7pt width 0.5pt depth 0pt
\vrule height 0.5pt width 6pt depth 0pt}}\nolimits}
\newcommand{\scoord}{{w_2}}
\newcommand{\scalarfunction}{\psi}
\newcommand{\seps}{s_k}
\newcommand{\Sone}{\mathbb{S}^1}
\newcommand{\sourcedisk}{{\rm B}}
\newcommand{\sourceradialcoordinate}{r}
\newcommand{\sourceangularcoordinate}{\alpha}
\newcommand{\subgraph}{SG}
\newcommand{\subgraphh}{SG_h}
\newcommand{\tcoord}{{w_1}}
\newcommand{\thetaeps}{\theta_ k}
\newcommand{\thetaepsbar}{\overline \theta_ k}
\newcommand{\teps}{\tau_k}
\newcommand{\Teps}{T_ k}
\newcommand{\vmap}{u}
\newcommand{\veps}{{\vmap_ k}}
\newcommand{\vortexmap}{u}
\numberwithin{equation}{section}
\mathchardef\emptyset="001F
\newtheorem{theorem}{Theorem}[section]
\newtheorem{definition}[theorem]{Definition}
\newtheorem{lemma}[theorem]{Lemma}
\theoremstyle{definition}
\newtheorem{remark}[theorem]{Remark}
\newtheorem{Remark}[theorem]{Remark}
\title{The $L^1$-relaxed area of the graph of the vortex map: optimal
	upper bound}
\author{
Giovanni Bellettini\footnote{
Dipartimento di Ingegneria dell'Informazione e Scienze Matematiche, Universit\`a di Siena, 53100 Siena, Italy,
and International Centre for Theoretical Physics ICTP,
Mathematics Section, 34151 Trieste, Italy.
E-mail: giovanni.bellettini@unisi.it
                      }\and
Alaa Elshorbagy\footnote{
Technische Universit\"at Dortmund, 
Fakult\"at f\"ur Mathematik, 
44227 Dortmund, Germany. E-mail: elshorbagy.alaa1@gmail.com 
                         }
                          \and
Riccardo Scala\footnote{ 
Dipartimento di Ingegneria dell'Informazione e Scienze Matematiche, Universit\`a di Siena, 53100 Siena, Italy.
E-mail: riccardo.scala@unisi.it
}
}
\begin{document}

\maketitle

\begin{abstract}
We compute an upper bound for the value of the $L^1$-relaxed area of the graph of the 
vortex map $\vortexmap : \sourcedisk_\longR(0)\subset \R^2 
\to \R^2$, $\vortexmap(x):= x/\vert x\vert$,
$x \neq 0$, for all values of $\longR>0$. 
Together with a previously proven lower bound, this upper bound turns out
to be optimal.
Interestingly, for the radius $\longR$ in 
a certain range, in particular $\longR$ not too large, a Plateau-type 
problem, having as solution 
a sort of catenoid constrained to contain a segment,  has to be solved.
\end{abstract}

\noindent {\bf Key words:}~~Area functional, minimal surfaces, Plateau problem,
relaxation, Cartesian currents.

\vspace{2mm}

\noindent {\bf AMS (MOS) subject clas\-si\-fi\-ca\-tion:} 
49Q15, 49Q20, 49J45, 58E12.

\section{Introduction}\label{sec:introduction}
Determining the domain and the expression of 
the relaxed area functional of graphs of nonsmooth maps 
in codimension greater than $1$ 
is a challenging problem whose solution is far from being reached. 
Let $\Omega \subset \R^n$  be a bounded open set and let $v:\Omega \rightarrow\R^N$ be a map of class $C^1$; 
the graph area  of $v$ over $\Omega$ is given by 
\begin{align}\label{area_classical}
 \area(v,\Omega)=\int_\Omega|\mathcal M(\nabla v)|~dx,
\end{align}
where $\mathcal M(\nabla v)$ is the vector whose entries are the 
determinants of the minors of the gradient $\nabla v$ of $v$ 
of all orders\footnote{By convention, the determinant
of order $0$ is $1$.} $k$, $0\leq k\leq\min\{n,N\}$.
In order to extend this  functional out of $C^1(\Omega,\R^N)$, one is led to
define, for any $v \in L^1(\Omega,\R^N)$, 
\begin{equation}\label{area_relax}
 \relarea
(v,\Omega):=\inf
\Big\{\liminf_{k\rightarrow +\infty}\area(v_k,\Omega)\Big\},
\end{equation}
which is  called the \emph{(sequential) relaxed area functional}. 
The infimum appearing in \ref{area_relax} is computed 
among all possible sequences of maps $v_k\in C^1(\Omega,\R^N)$ 
tending to  $v$ in $L^1(\Omega,\R^N)$. 
The results of Acerbi and Dal Maso \cite{AcDa:94} show that $\relarea(\cdot,\Om)$ 
extends $\area(\cdot,\Om)$ and is $L^1$-lower semicontinuous. This procedure of relaxation, 
besides extending the notion of graph's area to non-smooth maps, is 
needed also because $\mathcal A(\cdot,\Omega)$ 
is not   $L^1$-lower semicontinuous\footnote{When $n=N=2$, there 
are sequences $(v_k)\subset W^{1,p}(\Om, \R^2)$,
with $p \in [1,2)$,
weakly converging in $W^{1,p}(\Omega,\R^2)$ 
to a smooth map $v$ for which $\mathcal A(v,\Omega)>
\limsup_{k\rightarrow+\infty}\mathcal A(v_k,\Omega)$,
where $\area(v_k,\Omega)$ is defined in the same form
as for $C^1$-maps in 
\eqref{area_classical}, with
the determinant of $\grad v_k$ intended in the almost everywhere pointwise sense;
see \cite[Counterexample 7.4]{Ball_Murat:84} and \cite{AcDa:94}.
This counterexample must be slightly modified, considering
$u_k(x) = k x + \lambda (x/\Vert x\Vert_\infty -x)$
for $x \in [-1/k,1/k]$, with $\lambda > 0$ satisfying $(1+\lambda^2)/2 > \sqrt{1+\lambda^2}$,
in order to get the strict inequality above.
}, in 
contrast with  similar polyconvex functionals that enjoy a growth condition of the form $F(u)\geq C|\mathcal M(\nabla u)|^p$ for some $C>0$, and suitable $p>1$ (see, {\it e.g.},  \cite{Morrey:66,Dacorogna:89,FuHu:95}).

When $N=1$ it is possible to characterize 
the domain of $\relarea(\cdot,\Omega)$ 
and its expression \cite{DalMaso:80}:
$\relarea(v,\Omega)$ 
is finite 
if and only if $v\in BV(\Omega)$, in which case 
\begin{align}\label{integral_formula}
\relarea(v,\Omega)= 
\int_\Omega\sqrt{1+|\nabla v|^2}dx+|D^sv|(\Omega),
\end{align}
$\nabla v$ and $D^sv$ representing the absolutely continuous and singular parts of the distributional gradient $Dv$ of $v$. 
Formula \eqref{integral_formula} gives a classical 
example of non-parametric variational integral. This turns out to be a measure 
when considered as a function of $\Omega$
 (and the map $u$ being fixed \cite{GS:64}), and has several applications, as for instance 
in capillarity
problems \cite{Finn:86} and  in the analysis 
of the Cartesian Plateau problem \cite{Giusti:84}. 
The higher codimension case, namely $N>1$,
is much more involved and, once again, has as  main motivation 
the study of the Cartesian Plateau problem in higher codimension;  from a theoretical point of view, it is of indipendent interest in  
 Calculus of Variations questions  involving nonconvex integrands with
nonstandard growth (see, e.g., \cite{Ball:77,Dacorogna:89,GiMoSu:98}).

In this paper we restrict  our attention to the first non-standard case, namely $n=N=2$.
For a map $v \in C^1(\Omega, \R^2)$ and $\Omega \subset \subset \R^2$, the quantity
$\mathcal A(v,\Omega)$
coincides with the area of the graph 
$G_v:=\{(x,y)\in \Omega\times\R^2:y=v(x)\}$ of $v$ seen as a 
Cartesian surface of codimension $2$ in $\Omega\times\R^2$, and is given by
\begin{equation*}
 \area(v,\Omega)
=\int_\Omega\sqrt{1+|\nabla v(x_1,x_2)|^2+|Jv(x_1,x_2)|^2}~dx_1dx_2.
\end{equation*}
Here $\nabla v$ is the gradient of $v$, a $2\times2$ matrix, 
$\vert \nabla v\vert^2$ is the sum of the squares of all
elements of $\nabla v$, 
and $Jv$ is the Jacobian determinant of $v$, {\it i.e.}, the determinant of $\nabla v$.
It is worth to point out once more a couple of relevant difficulties
arising when the codimension is greater than $1$: the 
functional $\mathcal A(\cdot,\Omega)$ is 
no longer convex, but just polyconvex;  in addition
it has a sort of unilateral linear growth, in 
the sense that it is bounded below, but not necessarily above, by the total
variation of $v$. 
A characterization of the domain of $\relarea(\cdot, \Omega)$ and of its expression is,
at the moment, not available. Specifically, it is 
only known that the domain of $\relarea(\cdot,\Omega)$ 
is a proper subset of $BV(\Omega,\R^2)$, 
and that integral representation 
formulas such as \eqref{integral_formula} (on the domain of 
$\relarea(\cdot, \Omega)$) are not
possible. This is due to the 
additional difficulty that in general, for a fixed map $v$, 
the set function $A\subseteq \Omega \mapsto \relarea (v,A)$ 
may be not subadditive and,in such a case, 
it cannot be a measure (as opposite
to what happens in codimension $1$ for a large class of non-parametric variational integrals \cite{GS:64}). 
This interesting phenomenon 
was conjectured by De Giorgi \cite{DeGiorgi:92} for the 
triple junction map $u_T:\Omega=\sourcedisk_\longR(0)\rightarrow\R^2$, and proved in \cite{AcDa:94}, where the authors exhibited 
three subsets $\Omega_1, \Omega_2, \Omega_3$ of the open disk
$\sourcedisk_\longR(0)$ of radius $\longR$ centered at $0$,
such that 
\begin{align}\label{non-locality}
 \Omega_1\subset \Omega_2\cup\Omega_3\qquad\text{and }\qquad \relarea 
(u_T,\Omega_1)>\relarea(u_T,\Omega_2)+\relarea(u_T,\Omega_3).
\end{align}
The triple junction map $u_T \in BV(\Omega,\R^2)$ 
takes only three values $\alpha,\beta,\gamma\in \R^2$, 
the vertices of an equilateral triangle, in three circular $120^o$-degree
sectors of $\Omega$ meeting at $0$.
The same authors show that  the non-locality property \eqref{non-locality} holds also for the Sobolev
map $u(x)=\frac{x}{|x|}$, called here the vortex map,  where $\Omega$ is the open 
ball $B_\longR(0)$ of radius $\longR$ centered at the origin,
the singular point, and $n=N \geq 3$.
For these two maps $u_T$ and $u$ much effort
has been done to understand the exact value of the area 
functional;
the corresponding geometric problem stands in finding the optimal
way, in terms of area, to ``fill the holes'' of the graph 
of $u_T$ and $u$ (two non-smooth $2$-dimensional sets of 
codimension two) with limits
of sequences of smooth two-dimensional graphs. 
 In \cite{AcDa:94} it is proved that both $u_T$ and $u$ have finite relaxed area, but 
only lower and upper bounds were available for $u_T$, whereas the sharp estimate for $u$ is provided only for $l$ large enough. For the 
triple junction map
 $u_T$ an improvement is obtained in 
\cite{BePa:10}, where it is exhibited a sequence $(u_k)$ of Lipschitz maps 
$u_k:\sourcedisk_\longR(0)\rightarrow\R^2$ 
converging to $u$ in $L^1(\Om, \R^2)$, such 
that
\begin{align*}
\lim_{k \to +\infty}
 \area(u_k,\sourcedisk_\longR(0)) = |\mathcal G_{u_T}|+3m_\longR,
\end{align*}
where $|\mathcal G_{u_T}|$ is the area of the graph of $u_T$ 
out of the jump set, and $m_\longR$ 
is the area of an area-minimizing surface, solution of a Plateau-type problem in $\R^3$. 
Roughly speaking, three entangled area-minimizing surfaces with area $m_\longR$ 
(each sitting in a copy of $\R^3 \subset \R^4$, the three $\R^3$'s being 
mutually nonparallel)
are needed in $\sourcedisk_\longR(0)
\times \R^2$ to ``fill the holes'' left by the graph $\mathcal G_{u_T} $ of $u_T$, which is not boundaryless ({\it i.e.}, 
the boundary as a 
current is nonzero). The optimality of $(u_k)$ was also conjectured
in \cite{BePa:10}, and proven subsequently in  
\cite{Scala:19}, where a crucial tool is 
a symmetrization technique for boudaryless 
integral currents.

In the present paper we instead focus on the  \emph{vortex map} $u$ in $n=2$ dimensions, 
and provide the optimal upper bound for $\relarea(u,\sourcedisk_\longR(0))$, for all $l>0$. The vortex map, that is  
\begin{equation}\label{vortexmapdef}
 \vortexmap(x):=\frac{x}{|x|}, \qquad x \in \Om \setminus \{0\}, \ 
\Om = \sourcedisk_\longR(0) \subset \R^2, 
\end{equation}
 belongs to $W^{1,p}(\Omega,\R^2)$ for all $p \in [1,2)$, but not to $W^{1,2}(\Omega, \R^2)$),
and its image  is the one-dimensional unit circle $\mathbb S^1\subset\R^2$, 
so that  $Ju(x)= {\rm det}(\grad u(x)) =0$ for all 
$x\in \Omega \setminus \{0\}$. 
In \cite[Lemma 5.2]{AcDa:94}, the authors show\footnote{In \cite{AcDa:94} the 
proof of \eqref{valueA_llarge} is given also for $N=n\geq 2$, where now $\pi$ 
in \eqref{valueA_llarge} is replaced by $\omega_n$.} that, 
for $\longR$ large enough,
\begin{align}\label{valueA_llarge}
 \relarea(u,\sourcedisk_\longR(0))
=|\mathcal G_u|+\pi=\int_{\sourcedisk_\longR(0)}\sqrt{1+|\nabla u|^2}dx+\pi.
\end{align}
With the aid of an example, they also show that $\relarea(u,
\sourcedisk_l(0))$ must be strictly smaller than the right-hand side
of \eqref{valueA_llarge}, 
since there is a sequence of $C^1$-maps 
approximating $u$ and having, asymptotically, a lower
value of $\area(\cdot, \Om)$. 
We anticipate here 
that, when $\longR$ is small, the above mentioned 
sequence is not optimal, and the construction 
of a recovery sequence for $\relarea(u,\sourcedisk_\longR)$ 
is much more involved and requires to solve a sort of Plateau-type 
problem in $\R^3$ with singular boundary, with a part of multiplicity $2$. This has been studied in \cite{BES2}, where with a reflection argument
with respect to a plane,  it can be seen as a non-parametric Plateau-type problem with 
a partial free boundary; in the special setting of \cite{BES2} it is possible to show that, excluding a singular configuration (corresponding to $l$ large), the solution is non-parametric and attains the zero boundary condition on the free part (we refer to \cite{BMS} for a more general setting where similar results are obtained). 

To state our main result we need to fix some 
notation.
For $\longR>0$ we denote $R_{2\longR}:=(0,2l)\times (-1,1)$
and let $\partial_DR_{2l}:=(\{0,2l\}\times[-1,1])\cup((0,2l)\times \{-1\})$ be
what we call the Dirichlet boundary  of $R_{2\longR}$.
Define  
$\varphi: \partial_D R_{2\longR} \rightarrow [0,1]$ as
$\varphi(t,s)
:= \sqrt{1-s^2}$ 
if $(t,s) \in \{0,2\longR\} \times [-1,1]$
and $\varphi(t,s):=0$ if $(t,s) \in (0,2\longR) \times \{-1\}$.
Let 
\begin{align*}
	&\widetilde {\mathcal H}_{2l}:=\{
	h:[0,2l]\rightarrow [-1,1], \ h ~{\rm continuous},~ h(0)=h(2l)=1
	\},
	\\
	&\mathcal X_{D,\varphi}:=\{\psi\in W^{1,1}(R_{2l}):\psi=\varphi\text{ on }\partial_DR_{2l}\},
\end{align*}
and for any $h\in \widetilde {\mathcal H}_{2l}$ set
$ G_h:=\{(t,s)\in R_{2l}:s=h(t)\}$ and $SG_h:=\{(t,s)\in R_{2l}:s\leq h(t)\}$. The 
main result of the present paper (see Theorem \ref{Thm:maintheorem}) reads as follows: 

\begin{theorem}\label{teo_intro}
	Let $N=n=2$, $l>0$ and $u:\sourcedisk_\longR(0)\rightarrow\R^2$ be the vortex map defined in \eqref{vortexmapdef}. Then 
	\begin{align}\label{value_main}
		\relarea (u,\sourcedisk_\longR(0))
		\leq \int_{\sourcedisk_\longR(0)}\sqrt{1+|\nabla u|^2}dx+\inf\{ \mathcal A(\psi,SG_h):(h,\psi)\in \widetilde {\mathcal H}_{2l}\times 
		\mathcal X_{D,\varphi},\;\psi=0\text{ on }G_h\}. 
	\end{align}
\end{theorem}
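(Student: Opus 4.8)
The plan is to construct, for any admissible pair $(h,\psi)\in \widetilde{\mathcal H}_{2l}\times\mathcal X_{D,\varphi}$ with $\psi=0$ on $G_h$, an explicit sequence $(u_k)\subset C^1(\sourcedisk_\longR(0),\R^2)$ converging to $u$ in $L^1$ with $\limsup_k \area(u_k,\sourcedisk_\longR(0))\le \int_{\sourcedisk_\longR(0)}\sqrt{1+|\nabla u|^2}\,dx + \area(\psi,SG_h)$; passing to the infimum over such pairs then gives \eqref{value_main}. The geometric idea is that the graph $\mathcal G_u$ of the vortex map, written in polar coordinates $(\sourceradialcoordinate,\sourceangularcoordinate)$ on the source and in the target plane $\R^2=\{(y_1,y_2)\}$, is the surface $\{(\sourceradialcoordinate\cos\sourceangularcoordinate,\sourceradialcoordinate\sin\sourceangularcoordinate,\cos\sourceangularcoordinate,\sin\sourceangularcoordinate):\sourceradialcoordinate\in(0,\longR),\ \sourceangularcoordinate\in[0,2\pi)\}$, whose only non-boundaryless feature is the circle $\{\sourceradialcoordinate=0\}\times\Sone$ traced (with multiplicity) over the origin. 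To ``cap'' this circle we must insert a surface spanning it inside $\{0\}\times\R^2$; the key observation is that by axial symmetry such a cap is described by a profile function, and writing $t\in[0,2l]$ for (twice) an angular variable and $s\in[-1,1]$ for a height, the function $h$ records the radial extent of the cap while $\psi$ encodes the $C^1$ perturbation that realizes it, with $\varphi(t,s)=\sqrt{1-s^2}$ being exactly the trace forced by the unit circle $\Sone$ and the condition $\psi=0$ on $G_h$ encoding that the cap closes up.

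The key steps, in order: (1) Fix $(h,\psi)$ admissible; by density of smooth functions in $W^{1,1}$ and a standard diagonal argument we may assume $\psi\in C^1(\overline{R_{2l}})$ (or at least Lipschitz), at the cost of an arbitrarily small error in $\area(\psi,SG_h)$, so it suffices to treat smooth $\psi$. (2) On the annulus $\{r_k\le |x|\le\longR\}$ keep $u_k:=u$ unchanged, contributing $\int_{r_k\le|x|\le\longR}\sqrt{1+|\nabla u|^2}\,dx\to\int_{\sourcedisk_\longR(0)}\sqrt{1+|\nabla u|^2}\,dx$. (3) On the small disk $\{|x|\le r_k\}$ define $u_k$ by interpolating, in the $r$-variable on a scale $r_k\to 0$, between the boundary datum $u|_{\{|x|=r_k\}}=x/|x|\in\Sone$ and the ``cap'' surface; concretely, reparametrize the cap by the variables $(t,s)$, set the $C^1$ map on $\{|x|\le r_k\}$ to follow the graph $(t,s)\mapsto(\text{position on cap})$ built from $h$ and $\psi$, and check that the area of this piece is $\area(\psi,SG_h)+o(1)$ — here the metric on the cap, after the change of variables dictated by $\varphi$, reduces exactly to the non-parametric area integrand $\sqrt{1+|\nabla\psi|^2+|J\psi|^2}$ over $SG_h$, because on $G_h$ the surface degenerates (since $\psi=0$ there) in just the way that makes the boundary close up without extra area. (4) Verify $u_k\to u$ in $L^1$ (immediate since $|u_k|\le C$ and the modified region has vanishing measure), and that $u_k$ can be made genuinely $C^1$ by mollification near the gluing circle $\{|x|=r_k\}$ with negligible area cost, using that the traces match to first order by design.

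The main obstacle is step (3): making the change of variables precise so that the area of the inserted cap is controlled by $\area(\psi,SG_h)$ and not something larger. The subtlety is that the cap lives in a degenerate chart — near $G_h$ the parametrization $(t,s)$ collapses (reflecting the multiplicity-$2$ / folding behaviour alluded to in the discussion of \cite{BES2}), so one must check that the Jacobian of the parametrization, together with the factor coming from $\varphi=\sqrt{1-s^2}$, conspires to give exactly $\sqrt{1+|\nabla\psi|^2+(J\psi)^2}\,dt\,ds$ in the limit rather than blowing up, and that the interpolation across $\{|x|\le r_k\}$ connecting $\Sone$ to this cap adds only $o(1)$ area — this requires choosing the radial interpolation profile carefully (e.g. so that the tangential derivatives, which are the dangerous ones because of the $1/|x|$ scaling inherited from $u$, are compensated by the smallness of the region). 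Once this quantitative matching is in hand, summing the three contributions and taking the infimum over $(h,\psi)$ yields \eqref{value_main}.
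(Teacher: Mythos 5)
There is a genuine gap, and it is geometric. You propose to keep $u_k=u$ on the full annulus $\{r_k\le|x|\le\longR\}$ and insert the ``cap'' only in the shrinking disc $B_{r_k}$. This cannot recover the Plateau term. The variable $w_1\in[0,2\longR]$ of the rectangle $R_{2l}$ is \emph{not} an angular variable: it is a doubled copy of the source radial coordinate $r\in[0,\longR]$ (reflected across $w_1=\longR$), and $w_2$ encodes the first target coordinate. Correspondingly, the minimal surface being inserted must project onto an entire radius of $\sourcedisk_\longR(0)$ joining the origin to $\partial\sourcedisk_\longR(0)$ — this is exactly the ``phantom bridge'' phenomenon recalled in the introduction, and it is the reason $S_{\mathrm{opt}}$ is not $\delta_0\times\jump{B_1}$ for $\longR$ small. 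The paper therefore modifies $u$ inside a thin sector $C_k=\{\vert\alpha\vert\le\theta_k\}$ whose aperture $2\theta_k\to 0$ but whose radial extent is the full $[0,\longR]$; the change of variables $\mathcal T_k(r,\alpha)=(\tau_k(r),-s_k(r,\alpha))$ sends $C_k^+\setminus B_{r_k}$ onto $SG_{h_k^\star}\cap\overline R_\longR$, and it is precisely the region $C_k\setminus B_{r_k}$ that produces $\mathcal A(\psi^\star,SG_{h^\star})$ in the limit. The contribution from the small disc ($C_k\cap B_{r_k}$, handled in steps 3--4 of the paper) is $o(1)$; your plan would put the surface exactly where the area contribution vanishes.

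A secondary issue: you work with an arbitrary admissible pair $(h,\psi)$ and invoke density of smooth functions to smooth $\psi$. The paper instead first passes through the identity \eqref{43} to replace the infimum by $\min\{\FB(h,\psi)\}$, extracts a specific minimizing pair $(h^\star,\psi^\star)$ with sharp regularity (interior analyticity, continuity and boundary attainment as in Theorem \ref{Thm:existenceofminimizer}), and then performs the explicit Lipschitz regularizations \eqref{def_varphi_m}--\eqref{def_psi_m} and \eqref{def_h_k}. This is not merely cosmetic: the boundary behaviour of $\psi$ on $G_h$ and on $\{w_1=0\}$ (where $\psi=\sqrt{1-w_2^2}$, hence $\nabla\psi$ is unbounded) is exactly where the gluing is delicate, and Lemma \ref{lem:properties_of_psi_m}~(i) is used quantitatively in the estimates \eqref{estimate_area_step3} and \eqref{estimate_area_step4}. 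A blanket ``mollify $\psi$'' does not give you the crucial inequality $|\partial_{w_2}\psi_m^\star(0,\cdot)|\le|\partial_{w_2}\psi^\star(0,\cdot)|$, which is what makes those integrals go to zero. You would need to produce an analogous regularization scheme for a general $(h,\psi)$, which is not automatic.
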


We emphasize that, for $l$ large, the infimum on the right-hand side is $\pi$. 
Further, thanks to the opposite inequality proved in \cite{BES1}, equality holds in Theorem \ref{teo_intro}, for each value of $l>0$.

For $l$ small, the fact that the 
sequence leading to the value 
in \eqref{valueA_llarge} is not optimal is strongly related with the choice of the $L^1$-convergence in the 
definition \eqref{area_relax} of $\overline{\mathcal A}(\cdot, \Omega)$. 
Even if this seems the most natural notion of convergence for the 
approximating maps $v_k$ of $u$, one can also opts to choose stronger topologies. Some results are known when one chooses, 
instead of the $L^1$-convergence, the strict convergence in $BV(\Om;\R^2)$ (see \cite{CM,Mucci,BCS,BCS2,C}). With this convergence, it has been shown in \cite{BCS} that the relaxed area of the vortex map $u$ always equals the right-hand side of \eqref{valueA_llarge}.

In order to give an idea of how
the value $\pi$ in \eqref{valueA_llarge} pops up (and then how it 
appears in \eqref{value_main} for $l$ large), it is convenient to introduce the tool of Cartesian currents.
One can regard the graphs $G_v=\{(x,y)\in \Omega\times \R^2:y=v(x)\}$ of a $C^1$ map $v:\Omega\rightarrow\R^2$ as an integer multiplicity 
$2$-current in $\Omega\times \R^2$. It is seen that a 
sequence $(G_{u_k})$ with $u_k$ 
approaching $u$ and with $\sup_k \area(u_k,\Om)<+\infty$, 
converges\footnote{This is a consequence of Federer-Fleming closure theorem.}, up to subsequences, to a Cartesian current $T$ which splits as $T=\mathcal G_u+S$, with $S$ a vertical integral current such that $\partial S=-\partial \mathcal G_u$.
A direct computation shows that 
$$\partial \mathcal G_u=-\delta_0\times \partial \jump{B_1}$$ 
(see \cite[Section 3.2.2]{GiMoSu:98}),
so that the problem of determining the value of $\relarea(u,\Omega)$ is somehow related to the 
computation of the mass of a {mass-minimizing} vertical current $S_\textrm{min}\in \mathcal D_2(\Omega\times \R^2)$ satisfying 
\begin{align}\label{boundary_of_u}
 \partial S_\textrm{min}=\delta_0\times \partial \jump{B_1}\qquad \text{in }\mathcal D_1(\Omega\times \R^2).
\end{align}
In some cases, and in particular for $\longR$ large, these two problems are related, and it turns out that $S_{\rm min}
=\delta_0\times\jump{B_1}$, whose mass is $\pi$. However $S_\textrm{min}\neq \delta_0\times\jump{B_1}$ for $\longR$ small. Moreover, the two problems of determining $S_\textrm{min}$ and the value of the relaxed area 
functional are, unfortunately, not
 related in general. This is mainly due  to the following two obstructions:
\begin{itemize}
 \item we have to guarantee that the current $\mathcal G_u+S_\textrm{min}$ is obtained as a limit of smooth graphs, that is not easy to establish, since not all Cartesian currents can be obtained as such limits (see \cite[Section 4.2.2]{GiMoSu:98});
 \item even if $\mathcal G_u+S_\textrm{min}$ is 
limit of graphs $\mathcal G_{u_k}$ of smooth maps $u_k$, nothing ensures 
that $\area(u_k,\Omega)\rightarrow\relarea(u,\Omega)$, 
due to possible cancellations of the currents $\mathcal G_{u_k}$ that, 
in the limit, might overlap with opposite orientation.
\end{itemize}

Actually, in many cases, as in the one considered in this paper, 
for an optimal sequence $(u_k)$ 
realizing the value of $\relarea(u,\Omega)$, it holds
\begin{align}\label{Sopt}
\mathcal G_{u_k}\rightharpoonup\mathcal G_u+S_{\textrm{opt}}\neq \mathcal G_u+S_{\textrm{min}},
\end{align}
and the limit vertical part $S_{\textrm{opt}}$ 
satisfies $|S_{\textrm{opt}}|>|S_{\textrm{min}}|$. 
For instance, if $l$ is small, 
it is possible to construct a sequence $(\widehat u_k)$ 
approaching $u$ which is not a recovery sequence 
for $\relarea(u,\Om)$, but whose limit vertical part $S_{\textrm{min}}$ has mass strictly smaller than the mass of $S_{\textrm{opt}}$ (see Section \ref{sec:catenoid_with_a_flap}). In this case, a 
suitable projection of $S_\textrm{min}$ in $\mathbb R^3$ is half of a classical area-minimizing catenoid between two unit circles at distance $2l$ from each other.

An additional source of difficulties 
in the computation of $\relarea(u, \Om)$ is due to an 
example \cite{Scala:19}
valid for the triple junction map $u_T$, 
 and showing that the equality
\begin{align}\label{triplevalue}
 \relarea(u_T,\Omega)=|\mathcal G_{u_T}|+3m_\longR
\end{align}
holds only under some additional requirements; for instance if the 
triple junction point is exactly located at
the origin $0$ and the domain is a 
disc $\Omega=\sourcedisk_\longR(0)$ around it. In particular, for different domains, \eqref{triplevalue} is no longer valid, and $S_\textrm{opt}$ is a vertical current whose support projection on $\Omega$ is a set connecting the triple point with $\partial\Omega$, and which does not coincide
 with (neither is a subset of) the jump set of $u_T$ (see 
\cite[Example in Section 6]{Scala:19} and also \cite{BeElPaSc:19} for other non-symmetric settings).

A similar behaviour of the vertical part $S_\textrm{opt}$ 
holds for $u$: when $\longR$ is small, 
the projection of 
$S_\textrm{opt}$ on $\sourcedisk_\longR(0)$ concentrates over a radius connecting $0$ to $\partial \sourcedisk_\longR(0)$. However, if the domain $\Omega$ loses its symmetry, 
almost nothing is known about $S_\textrm{opt}$.

This kind of phenomena have been observed also in 
other cases, as in \cite{BePaTe:15,BePaTe:16} where $BV$-maps  $u:\Omega\rightarrow\R^2$
with a prescribed discontinuity on a curve (jump set) are considered. The creation of such ``phantom bridges'' 
between the singularities of the 
map $u$ and the boundary of the domain is very specific of the choice of the $L^1$-convergence 
in the computation of $\relarea(\cdot, \Omega)$. As already said, 
other choices are possible, giving rise to different relaxed 
functionals\footnote{Relaxing $\area(\cdot,\Om)$ in stronger
	topologies $\tau$ is possible (see, e.g., \cite{Mucci,BCS}); however, this would make more
	difficult to prove, eventually, $\tau$-coercivity of 
$\relarea(\cdot,\Omega)$. In 
	addition, it could destroy the interesting
	nonlocal phenomena related to the 
	appearence of certain nonstandard Plateau problems,
	which are the focus of this paper.} 
\cite{BePaTe:15,BePaTe:16}.  

The nonlocality and the uncontrollability of $S_\textrm{opt}$ are more and more evident if we try to generalize \eqref{triplevalue} dropping the assumption that the range of $u_T$ 
consists of the vertices of an equilateral triangle. If we assume that $u_T$ takes values in $\{\alpha,\beta,\gamma\}$, three generic (not aligned)
points in $\R^2$ then, also if the domain of $u_T$ is symmetric, there is no 
sharp computation of $\relarea(u_T,\Omega)$.
In this case, the analysis  is related to an entangled Plateau problem, where three area-minimizing discs have as partial free boundary three curves connecting the couples of points in $\{\alpha,\beta,\gamma\}$, respectively, and where these three curves are forced to overlap. Some  partial results had been obtained in \cite{BeElPaSc:19}, where the authors find an upper bound for $\relarea(u_T,\Omega)$. However the question of finding the value of $\relarea(\cdot, \Omega)$ for this piecewise constant maps seems to be 
difficult. In the case that $u$ is piecewise costant and takes three values vertices of an equilateral triangle, as for the triple junction map but in general domains, some upper bounds have been provided \cite{ScSc}. The singular contribution of the area is related with the flat norm of the distributional Jacobian of such maps \cite{DLSVG}. Similarly, when $u\in W^{1,1}(\Om;\mathbb S^1)$ is circle-valued, 
it is possible to show that the singular contribution of the area is bounded above by (a suitable multiple) of the flat norm of $\det (\nabla u)$ (see \cite{BSS}).
\medskip

Let us go back to the minimum problem 
\begin{align}\label{mio2}
\inf\{ \mathcal A(\psi,SG_h):(h,\psi)\in \widetilde {\mathcal H}_{2l}\times 
	\mathcal X_{D,\varphi},\;\psi=0\text{ on }G_h\}.
\end{align}
Following \cite{BES2}, this problem has many formulations and 
it is proved that the infimum in 
\eqref{mio2} coincides with 
\begin{align}\label{mio}
\min\big \{ \FB(\h,\psione):  (\h,\psione) \in  
\domalaa
\big \}.
\end{align}
Here we refer to Section \ref{sec:setting_of_the_problem} for the notation and definition of $\FB$. Also, a solution to this minimum problem 
has been proven to exist and satisfies suitable regularity property if $l\leq l_0$, for some threshold $l_0>0$
(see Theorem \ref{Thm:existenceofminimizer}). 
If instead $l>l_0$, the unique solution to \eqref{mio}  is given by the two constants maps $h\equiv1$ and $\psi\equiv0$, corresponding to the case where $\FB$ measures the area of two half-discs of radius $1$, namely providing the value $\pi$ appearing in \eqref{valueA_llarge}.

We do not know the explicit value of the threshold $l_0$.
However, it is clear that $l_0>\frac12$ 
(see \cite{BES2}).
Furthermore, let the surface $\Sigma^+$ be the graph of a regular solution $\psi$ when $l<l_0$. Doubling the surface $\Sigma^+$ 
by considering 
its symmetric with respect to the plane containing $R_{2l}$, and then 
taking the union $\Sigma$ of these two area-minimizing surfaces, 
it turns out that $\Sigma$ solves a non-standard Plateau problem, spanning a nonsimple curve which shows self-intersections 
(this is the union of $\Gamma$ with its symmetric with 
respect to $R_{2l}$, the obtained curve is the union of two circles connected by a segment \cite{BES2}). 
Again, the obtained
area-minimizing surface is a sort of catenoid forced to contain a segment
 for $\longR$ small, and two distinct discs spanning the two circles for $\longR$ large. 
The restriction of $\Sigma$ to the set $\overline B_1\times [0,l]$ is a suitable projection in $\R^3$ of the aforementioned vertical current $S_{\rm opt}$.

In order to prove our main result, the analysis 
consists in a careful definition of a recovery sequence $(u_k)$
converging to the vortex map, and thus such that 
$\area(u_k,
\sourcedisk_\longR(0))$
approaches the value on the right-hand side of \eqref{value_main}
as $k \to +\infty$. 
To explicitely construct $u_k$, we need first to relate the minimum problem stated in \eqref{mio} with the non-parametric Plateau-type problem in \eqref{mio2}; this is obtained in \cite{BES2}, where we exploit the convexity of the domain together with some well-known regularity results for the solution of the Plateau problem in this setting. This analysis leads us to Theorem \ref{Thm:existenceofminimizer}, which 
characterizes the solution of \eqref{mio}, and which is based on a regularity result for the minimizing pair $(h^\star,\psi^\star)\in \widetilde {\mathcal H}_{2l}\times 
\mathcal X_{D,\varphi}$.  Finally, thanks to the regularity 
results that we have obtained (especially, boundary
regularity), in Section \ref{sec:final} we define explicitely the 
maps $u_k$,
making a crucial use of rescaled versions 
of the area-minimizing surface $\Sigma$ in a vertical copy of $\R^3$ inside
$\R^4$,
 and prove the upper bound in Theorem \ref{Thm:maintheorem}.

The paper is organized as follows: in Sections \ref{sec:preliminaries} 
and \ref{sec:setting_of_the_problem} we introduce some notation and the setting of the problem. 
In Section \ref{sec:three_examples} 
we provide some examples of potential recovery sequences, 
one of which is optimal in the case $l$ large. 
Finally, in Section \ref{sec:final} we construct a
 recovery sequence in the more involved case $l\leq l_0$. 

\section{Preliminaries}\label{sec:preliminaries}

The symbol $\area(v,\Omega)$ denotes  the classical area of the graph of a smooth map $v:\Omega\subset\R^n\rightarrow \R^N$, 
given by the right hand side of \eqref{area_classical}.  We will deal with the case $n=2$ and mostly with the cases $(n,N)=(2,1)$ and 
$(n,N)=(2,2)$. The $L^1$-relaxed area functional 
is denoted by $\relarea(v,\Omega)$ and is defined in \eqref{area_relax}.

We first remark that the infimum in \eqref{area_relax} can be equivalently
considered as taken over the class of sequences $(v_k)\subset \textrm{Lip}(\Omega;\R^2)$. 
This does not change the value of $\relarea(\cdot,\Om)$, as observed in \cite{BePa:10}. 

Recall that in formula \eqref{area_classical} the symbol $\mathcal M(\nabla v)$ denotes the vector whose entries are all 
determinants of the minors of $\nabla v$. Precisely, let $\alpha$ and $\beta$ be subsets of $\{1,2\}$, let $\bar\alpha$ denote the complementary set of $\alpha$, namely $\bar\alpha=\{1,2\}\setminus\alpha$, let $|\cdot|$ denote the cardinality, and let $A\in \R^{2\times2}$ be a matrix. Then, if $|\alpha|+|\beta|=2$, we denote by  
$M_{\bar\alpha}^\beta(A)$
 the determinant of the submatrix of $A$ whose lines are those with index in $\beta$, and columns with index in $\bar\alpha$. By convention $M_\emptyset^\emptyset(A)=1$ and  moreover
$$M_{j}^i=a_{ij},\qquad i,j\in \{1,2\},\qquad \qquad M_{\{1,2\}}^{\{1,2\}}(A)=\det A,$$
and the vector $\mathcal M(A)$ 
takes the form $$\mathcal M(A)=(M_{\bar\alpha}^\beta)(A)=(1,a_{11},a_{12},a_{21},a_{22},\det A),$$
where $\alpha$ and $\beta$ run over all the subsets of $\{1,2\}$ with the constraint $|\alpha|+|\beta|=2$. We identify $\alpha$ and $\beta$ as multi-indices in $\{1,2\}$.

\subsubsection{Area in cylindrical coordinates}
Polar coordinates
in the source space $\R^2_{{\rm source}}$ are denoted by $(\sourceradialcoordinate,
\sourceangularcoordinate)$.
Polar coordinates
in the target space $\R^2_{{\rm target}}$ are denoted by $(\rho,\theta)$. 

Assume that $B=\{(r,\sourceangularcoordinate)\in \R^2: r \in (r_0,r_1),
~\sourceangularcoordinate \in (\sourceangularcoordinate_0,\sourceangularcoordinate_1)\}$; then the area of the graph of 
the smooth map $v=(v_1,v_2)$ in polar coordinates over $B$ is given by 
 \begin{equation*}
  \area (v,B)=\int_{r_0}^{r_1}\int_{\sourceangularcoordinate_0}^{\sourceangularcoordinate_1} \vert \M(\nabla v)\vert (r,\sourceangularcoordinate)~ r dr d\sourceangularcoordinate.
 \end{equation*}
Recall that, for $i \in \{1,2\}$, we have 
\begin{align*}
\partial_{x_1}v_i=\cos \sourceangularcoordinate
 \partial_r v_i - \frac{1}{r} \sin \sourceangularcoordinate
  \partial_\sourceangularcoordinate v_i,\qquad 
\partial_{x_2}v_i=\sin\sourceangularcoordinate
 \partial_r v_i + \frac{1}{r} \cos \sourceangularcoordinate
 \partial_\sourceangularcoordinate v_i.
\end{align*}
Hence 
\begin{align}
&\vert \nabla v_i\vert ^2 = 
(\partial _r v_i)^2 +\frac{1}{r^2} (\partial_\sourceangularcoordinate v_i)^2,\qquad i \in \{1,2\},\label{eqn:gradinpolar}
\\
&\partial_ {x_1} v_1 \partial_{ x_2} v_2-\partial_{ x_2}v_1
\partial_{ x_1}v_2 =\frac{1}{r}\Big( 
\partial_ {r} v_1 \partial_{\sourceangularcoordinate} v_2-\partial_{\sourceangularcoordinate}v_1
\partial_{ r}v_2\Big).
\nonumber
\end{align}

Thus the area of the graph of $v$ over $B$ is given by 
\begin{equation}\label{eqn:areapolarexpression}
\begin{aligned}
& \area (v,B)
\\
=& \int_{r_0}^{r_1}\int_{\sourceangularcoordinate_0}^{\sourceangularcoordinate_1} 
\sqrt{1+(\partial_r v_1)^2+(\partial_r v_2)^2+\frac{1}{r^2}
\left\{
(\partial_\sourceangularcoordinate
 v_1)^2+
(\partial_\sourceangularcoordinate
 v_2)^2+\Big( 
\partial_ {r} v_1 \partial_{\sourceangularcoordinate} v_2-\partial_{\sourceangularcoordinate}v_1
\partial_{ r}v_2\Big)^2\right\}}~ r dr d\sourceangularcoordinate.
\end{aligned}
\end{equation}
%
We denote by $\sourcedisk_r=\sourcedisk_r(0)\subset\R^2 = \R^2_{{\rm source}}$ the open disc centered at $0$ 
with radius $r>0$ in the source space. 
Our reference domain is $\Omega=\sourcedisk_\longR \subset \R^2_{{\rm source}}= \R^2_{(x_1,x_2)}$
where $l>0$ is fixed once for all. 

\subsection{Graphs in codimension $1$}\label{sec:generalized_graphs_in_codimension_one}
Let $\Om$ be an open bounded set and let $v\in L^1(\Omega)$. If $v\in C^1(\Om)$ 
the classical area of its graph is given by 
$$\mathcal A(v,\Om):=\int_\Om\sqrt{1+|\nabla v|^2}dx.$$
This notion is extended to every function $v\in L^1(\Omega)$ by relaxation as 
in \eqref{area_relax}, and $\overline{\mathcal  A}(v,\Om)$ coincides with \eqref{integral_formula}. For all $v\in L^1(\Om)$ we denote by $R_v\subseteq\Omega$ the set of regular points of $v$, 
{\it i.e.}, the set consisting of points $x$ which are Lebesgue points for 
$v$, $v(x)$ 
coincides with the Lebesgue value of $v$ at $x$, and $v$ 
is approximately differentiable at $x$. We also set
\begin{align*}
 &G_v^R:=\{(x,v(x))\in R_v\times \R\},
\\
 &SG_v^R:=\{(x,y)\in R_v\times \R:y< v(x)\}.
 \end{align*}
 We often will identify $SG_v^R$ with the integral $3$-current $\jump{SG_v}\in \mathcal D_3(\Omega\times\R)$. If $v$ is a 
function of bounded variation, $\Omega\setminus R_v$ has 
zero Lebesgue measure, so that the current $ \jump{SG_v}$ 
coincides with the integration over the subgraph 
 \begin{align*}
 &SG_v:=\{(x,y)\in \Omega\times \R:y< v(x)\}.
 \end{align*}
For this reason we often identify $SG_v=SG_v^R$.
It is well-known that the  perimeter of $SG_v$ in $\Omega\times \R$ 
coincides with $\relarea(v,\Omega)$.

The support of the boundary of $\jump{SG_v}$ 
includes the graph $G_v^R$, but in general consists also of additional parts, called vertical. We denote by
$$\mathcal G_v:=\partial \jump{SG_v}\res(\Omega\times\R),$$
the generalized graph of $v$, which is a $2$-integral current supported on $\partial^*SG_v$, the reduced boundary of $SG_v$ in $\Omega\times\R$. 

Let $\widehat \Omega \subset \R^2$ 
be a bounded  open set such that $\Omega\subseteq \widehat\Omega$, 
and suppose that 
$L := \widehat \Omega\cap
\partial \Omega$ is a rectifiable 
curve. Given $\scalarfunction 
\in BV(\Omega)$ and  a $W^{1,1}$ function $\varphi:
\widehat \Omega \to \R$, we can consider 
\begin{align*}
 \overline{\scalarfunction}:=\begin{cases}
               f & \text{on }\Omega,
\\
               \varphi&\text{on }\widehat \Omega\setminus\Omega.
               \end{cases}
\end{align*}
Then (see \cite{Giusti:84}, \cite{AmFuPa:00})
\begin{align*}
 \relarea(\overline{\scalarfunction},\widehat\Om)=
\relarea(\scalarfunction,\Om)+\int_{L}|\scalarfunction-\varphi|d\mathcal H^{1}
+\relarea(\varphi,\widehat\Om
\setminus\overline{\Om}).
\end{align*}
%
\section{Setting of the problem}\label{sec:setting_of_the_problem}

Let us focus on the minimum problem on the right hand side of \eqref{value_main}, i.e., 
\begin{align}
	\inf\{ \mathcal A(\psi,SG_h):(h,\psi)\in \widetilde {\mathcal H}_{2l}\times 
	\mathcal X_{D,\varphi},\;\psi=0\text{ on }G_h\}.
\end{align}

\begin{definition}[\textbf{The functional $\FB$}] \label{def:FB}
	We define 
	\begin{align}\label{eq:space_of_h_in_the_doubled_interval}
		&\domalaa
		:=
		\left\{(h,\psi): h \in \Hspace, \psi\in BV(\doubledrectangle, [0,1]),\psi = 0 \ 
		{\rm on}~ \OmAlaa \setminus \subgraphh \right\},\\
		&
		\Hspace
		=\big \{\h : [0,2 \longR] \to [-1,1], 
		~ \h {\rm~ convex}, ~
		\h(\axialcoordofcylinder)=\h(2\longR-\axialcoordofcylinder) 
		~\forall \axialcoordofcylinder \in [0,2\longR]
		\big \}.
	\end{align}
	and for any $(h,\psi) \in 
	\domalaa$, 
	\begin{equation}
		\FB(\h,\psione):=\mathcal A(\psione;\OmAlaa)-
		\Htwo(\OmAlaa \setminus \Omegah)+\int_{\partialbar\OmAlaa}  
		\vert \psione  -\dirdatum\vert d \Hone 
		+ \int_{
			\partial \OmAlaa \setminus \partialbar\OmAlaa
		}  \vert \psione \vert 
		~d \Hone.
		\label{eqn:FB}
	\end{equation}
\end{definition}

In \cite[Theorem 1.2]{BES2} it is shown that 
\begin{align}\label{43}
	\inf\{ \mathcal A(\psi,SG_h):(h,\psi)\in \widetilde {\mathcal H}_{2l}\times 
	\mathcal X_{D,\varphi},\;\psi=0\text{ on }G_h\}=\inf \big \{ \FB(\h,\psione):  (\h,\psione) \in  \domalaa
	\big \},
\end{align}
and for this reason it is necessary to investigate existence and regularity of minimizers of $ \FB$.
To this aim it is first convenient to extend $\varphi$ in the doubled rectangle
$\overline R_{2l}$ by defining the extension $\widehat \varphi$
as:
\begin{equation}\label{eq:widehat_varphi}
	\widehat \varphi(w_1,w_2)=
	\widehat 
	\varphi(0,w_2):=\sqrt{1-w_2^2}\qquad \forall (w_1,w_2)\in \overline R_{2l}.
\end{equation}
From \cite[Theorem 1.1]{BES2} the following result follows:

\begin{theorem}[\textbf{Minimizing pairs}]
	\label{Thm:existenceofminimizer}
	There exists $(\hstar,\psionestar) \in  \domalaa$  such that 
	\begin{equation}\label{eqn:B}
		\FB(\hstar, \psionestar)=\min \big \{ \FB(\h,\psione):  (\h,\psione) \in  \domalaa
		\big \},
	\end{equation}
	and $\psionestar$ is symmetric with respect to 
	$\{\axialcoordofcylinder=\longR\} \cap R_{2l}$. Moreover there exists a threshold $l_0>0$ such that, for $l>l_0$ the above minimizer is $(h^\star,\psi^\star)=(1,0)$, two constant functions, whereas for $0 <l\leq l_0$ the above minimizer satisfies the following features: $\hstar$ is not identically $-1$ and 
	\begin{itemize}
		\item[(i)] $\hstar(0)=1=\hstar(2\longR)$, and 
		$\hstar >-1$ in $(0,2\longR)$;
		\item[(ii)] $\psionestar$ is locally Lipschitz,  analytic,
		and strictly positive
		in $\Omegahstar$; 
		\item[(iii)] $\psionestar$ 
		is continuous up to the boundary of $\Omegahstar$, 
		and attains the boundary conditions, \textit{i.e.}, for $(\tcoord, \scoord)
		\in 
		\partial SG_{h^*}$,  
		\begin{equation}\label{eqn:psionestarboundary}
			\psionestar(\axialcoordofcylinder,\scoord)=\begin{cases}
				0& \text{ if}\quad \scoord=-1 \text{ or }\scoord=\hstar(\axialcoordofcylinder),\\
				\sqrt{1-\scoord^2}&\text{ if}\quad \axialcoordofcylinder=0 \text{ or } \axialcoordofcylinder=2\longR,
			\end{cases} 
		\end{equation}
		hence 
		\begin{equation}\label{eqn:FAequalareaofhstar}
			\FB(\hstar,\psionestar)=\mathcal A(\psionestar, \Omegahstar);
		\end{equation}
		\item[(iv)] we have 
		\begin{equation}\label{eq:psi_star_below_cylinder}
			\psi^\star < \widehat \varphi {\rm ~in}~ R_{2l}.
		\end{equation}
	\end{itemize}
\end{theorem}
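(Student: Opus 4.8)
The plan is to reduce everything to the results already proved in \cite{BES2} for the Plateau-type problem, and to transfer them along the equivalence \eqref{43}. First, existence of a minimizing pair $(\hstar,\psionestar) \in \domalaa$ follows from the direct method: one takes a minimizing sequence $(\h_n,\psione_n)$ for $\FB$; the convexity constraint and the two-sided bound $-1 \le \h_n \le 1$ give equi-Lipschitz bounds on $\h_n$ away from the endpoints (a convex function bounded in $[-1,1]$ on $[0,2\longR]$ has controlled slopes on any interior interval), hence $\h_n \to \hstar$ uniformly on compact subsets of $(0,2\longR)$ with $\hstar$ convex; meanwhile $\psione_n$ is bounded in $BV(\doubledrectangle)$ because $\FB(\h_n,\psione_n)$ controls $\mathcal A(\psione_n;\OmAlaa) \ge |D\psione_n|(\OmAlaa)$ together with the boundary terms, so $\psione_n \to \psionestar$ in $L^1$ up to subsequence with the trace/constraint $\psione = 0$ on $\OmAlaa \setminus \Omegahstar$ passing to the limit. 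Lower semicontinuity of $\mathcal A(\cdot;\OmAlaa)$ and of the boundary integrals, together with upper semicontinuity of $\Htwo(\OmAlaa \setminus \Omegah)$ under the convergence $\h_n \to \hstar$ (the subgraphs converge, with care at the endpoints), yield $\FB(\hstar,\psionestar) \le \liminf \FB(\h_n,\psione_n)$, which is minimality. The symmetry of $\psionestar$ with respect to $\{\axialcoordofcylinder = \longR\}$ follows by a standard reflection/averaging argument: since $\varphi$, $R_{2l}$ and the constraint set are all symmetric under $\axialcoordofcylinder \mapsto 2\longR - \axialcoordofcylinder$, symmetrizing a minimizer does not increase $\FB$, and by the uniqueness/strict convexity features one may take the minimizer itself symmetric (alternatively one replaces $(\hstar,\psionestar)$ by its symmetrization, which is again admissible since $\hstar$ is already symmetric by definition of $\Hspace$).

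Next, the dichotomy in $l$ and the regularity statements (i)--(iv) are quoted from \cite[Theorem 1.1]{BES2} via the identification \eqref{43}. Concretely, the infimum in \eqref{43} is attained by a pair $(h,\psi) \in \widetilde {\mathcal H}_{2l} \times \mathcal X_{D,\varphi}$ whose regularity is governed by the theory of the non-parametric Plateau problem with partial free boundary developed there: $\psi$ is the solution of a minimal-surface-type equation in the (convex) domain $SG_h$, hence analytic and strictly positive in the interior by the strong maximum principle (the zero boundary datum on $G_h$ together with the positive datum $\sqrt{1-w_2^2}$ on $\{w_1 \in \{0,2\longR\}\}$ forbids $\psi \equiv 0$ when $h \not\equiv -1$), which gives (ii); boundary regularity up to $\partial SG_{h^\star}$, including the free portion $G_{h^\star}$, is the boundary-regularity result for this Plateau problem, giving (iii) and in particular \eqref{eqn:psionestarboundary}; once the boundary values are attained, the terms $\int_{\partial SG_{h^\star} \cap \{w_2 = -1 \text{ or } w_2 = h^\star\}} |\psi| \, d\Hone$ in \eqref{eqn:FB} vanish and the Dirichlet term reduces to part of $\mathcal A(\psi, SG_{h^\star})$, yielding the collapse \eqref{eqn:FAequalareaofhstar}; the strict inequality \eqref{eq:psi_star_below_cylinder} comes from comparing $\psi^\star$ with the explicit supersolution $\widehat\varphi(w_1,w_2) = \sqrt{1-w_2^2}$ (whose graph is the unit half-cylinder, a minimal surface away from $w_2 = \pm 1$) and invoking the strong maximum principle, strictness following because equality would force $\psi^\star \equiv \widehat\varphi$, incompatible with the zero datum on $G_{h^\star}$. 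Finally (i): $h^\star(0) = h^\star(2\longR) = 1$ is forced by the definition of $\Hspace$ combined with the observation that lowering $h$ at the endpoints is not admissible/not optimal, and $h^\star > -1$ on $(0,2\longR)$ is the statement that the minimizing configuration is non-degenerate, again from \cite{BES2}; the threshold $l_0$ is defined as the supremum of the $l$ for which this non-degenerate configuration strictly beats the competitor $(1,0)$, and for $l > l_0$ a direct comparison (the catenoid-type surface costs more area than the two flat half-discs when the circles are far apart) shows $(1,0)$ is the unique minimizer, giving value $\pi = \Htwo$ of two unit half-discs.

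The main obstacle is the \textbf{boundary regularity along the free part $G_{h^\star}$} and the correct bookkeeping in passing from the relaxed functional $\FB$ (with its $L^1$-trace boundary terms, which a priori allow the boundary condition to be attained only in a weak sense) to the classical Plateau formulation where $\psi$ is genuinely continuous up to $\partial SG_{h^\star}$ and attains \eqref{eqn:psionestarboundary} pointwise. This is exactly where the convexity of $SG_{h^\star}$ and the reflection trick of \cite{BES2} (turning the free-boundary problem into a two-sided non-parametric problem spanning the doubled curve $\Gamma \cup \reflection\Gamma$) are essential: they are what upgrade the $BV$-minimizer to an analytic surface attaining its data classically and what rule out the pathological situation in which the free boundary detaches. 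Everything else---existence, symmetry, the interior analyticity and maximum-principle arguments, and the large-$l$ comparison---is routine once \eqref{43} and the cited results of \cite{BES2} are in hand.
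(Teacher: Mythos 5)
The paper offers no proof of this statement: the line immediately preceding it reads ``From \cite[Theorem 1.1]{BES2} the following result follows,'' and that is the whole of the argument---the theorem is simply imported from the companion paper. Your proposal correctly defers the substantive assertions (the dichotomy in $l$, boundary regularity along the free part $G_{h^\star}$, non-degeneracy of $h^\star$) to the same reference, so in substance you take the same route; the surrounding sketch---existence by the direct method, symmetrization, the comparison argument for \eqref{eq:psi_star_below_cylinder}---is a plausible reconstruction of what \cite{BES2} must contain, but it is not part of this paper's argument. One small inaccuracy worth flagging: the graph of $\widehat\varphi(w_1,w_2)=\sqrt{1-w_2^2}$ (a half-cylinder) is \emph{not} a minimal surface---a direct computation gives $\mathrm{div}\bigl(\nabla\widehat\varphi/\sqrt{1+|\nabla\widehat\varphi|^2}\bigr)=-1$, so its mean curvature is constant and nonzero---but it is a \emph{strict supersolution} of the minimal surface equation, which is exactly what the comparison principle requires and what produces the strictness in \eqref{eq:psi_star_below_cylinder}.
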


A minimizer $(h^\star, \psi^\star)$ of \eqref{eqn:B}
is needed
for constructing a recovery sequence $(u_k) \subset {\rm Lip}(\Om, \R^2)$,
see formulas \eqref{vepsonCeps} and \eqref{vepsonCeps-}:
we know that 
$\psi^\star$ 
is locally Lipschitz, but not Lipschitz, in $R_{2l}$, therefore we need first a regularization 
procedure. This is made in Lemma \ref{lem:properties_of_psi_m} below, that will be 
used in the proof of step 2 of Theorem \ref{Thm:maintheorem}.

Let $(h^\star,\psi^\star)$ be a minimizer provided by 
Theorem \ref{Thm:existenceofminimizer}, and assume that $h^\star$ is not identically $-1$ (namely, we are in the case $l\leq l_0)$. 
We fix an integer $m>0$
and, recalling the definition of $\widehat \varphi$ in 
\eqref{eq:widehat_varphi}, define
\begin{align}\label{def_varphi_m}
\varphi_m:=\Big(\widehat 
\varphi-\frac2m\Big)\vee0 
\qquad {\rm in}~ \overline R_{2l}.
\end{align}
We observe that  $\varphi_m$ is Lipschitz continuous in $\overline R_{2l}$.
We then set
\begin{align}\label{def_psi_m}
\psi^\star_m:=\Big(\big(\psi^\star-\frac1m)\vee0\big)\Big)\wedge\varphi_m
\qquad {\rm in}~ R_{2l}.
\end{align}
Since $\psi^\star$ is locally Lipschitz in $R_{2l}$, an easy check
shows that $\psi^\star_m$ is Lipschitz continuous in $R_{2l}$ for
any $m$ (with an unbounded Lipschitz constant as $m\rightarrow+\infty$).
 This follows from the fact that 
$\psi^\star$ is continuous up to the boundary of $R_{2l}$
(see Theorem  \ref{Thm:existenceofminimizer} (iii)) 
and  hence 
$\psi^\star_m$ coincides with 
either $0$ or $\varphi_m$ in a 
neighborhood of $(\partial_D R_{2l})\cup G_{h^\star}$ in $R_{2\longR}$. 
Furthermore, still $\psi^\star_m=0$ on the upper graph $\overline{R_{2l}}\setminus SG_{h^\star}=\{(w_1,w_2)\in \overline{R_{2l}}:w_2\geq h^\star(w_1)\}$ of $h^\star$.

\begin{lemma}[\textbf{Properties of $\psi_m^\star$}]
\label{lem:properties_of_psi_m}
	Let $(h^\star,\psi^\star)$ be a minimizer of $\FB$ as in 
Theorem \ref{Thm:existenceofminimizer} and assume $h^\star$ is not identically $-1$. 
For all $m>0$ let $\psi^\star_m$ be defined as in \eqref{def_psi_m}.
	Then: 
	\begin{itemize}
		\item[(i)] $\psi^\star_m$ is 
Lipschitz continuous in $\overline{SG_{h^\star}}$,  $\psi^\star_m=0$ 
on $([0,2l]\times\{-1\}) \cup (\overline{R_{2l}}\setminus SG_{h^\star})$, 
and $\psi^\star_m(0,\cdot)=\varphi_m(0,\cdot)$, so that  $|\partial_{w_2}\psi^\star_m(0,\cdot)|\leq|\partial_{w_2}\varphi(0,\cdot)|=|\partial_{w_2}\psi^\star(0,\cdot)|$ a.e. in $[-1,1]$; \
		
		\item[(ii)] $(\psi^\star_m)$ 
converges to $\psi^\star$ uniformly on $\{0,2l\}\times [-1,1]$ as $m \to +\infty$;
		
		\item[(iii)] we have 
\begin{align}\label{eq:lim_m}
		\lim_{m\rightarrow +\infty} 
\mathcal A(\psi^\star_m, SG_{h^\star})=
		\mathcal A(\psi^\star, SG_{h^\star}).
		\end{align}
\end{itemize}
	As a consequence $\FB(h^\star,\psi^\star_m)\rightarrow \FB(h^\star,\psi^\star)$ as $m\rightarrow +\infty$.
\end{lemma}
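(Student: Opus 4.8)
The plan is to verify the three listed properties in order, and then deduce the final convergence $\FB(h^\star,\psi^\star_m)\to\FB(h^\star,\psi^\star)$ as an immediate corollary of (iii) together with the structure of $\FB$ from \eqref{eqn:FB}. First, for (i): the Lipschitz continuity of $\psi^\star_m$ in $\overline{SG_{h^\star}}$ follows from the fact that $\psi^\star$ is locally Lipschitz in $R_{2l}$ (Theorem \ref{Thm:existenceofminimizer}(ii)) and continuous up to $\partial SG_{h^\star}$ with the explicit boundary values \eqref{eqn:psionestarboundary}; the truncations in \eqref{def_psi_m} against the constant $0$ and against the Lipschitz function $\varphi_m$ preserve Lipschitz continuity, and near $(\partial_D R_{2l})\cup G_{h^\star}$ the function $\psi^\star_m$ locally coincides with $0$ or with $\varphi_m$, which kills the blow-up of the local Lipschitz constant of $\psi^\star$ at the corners. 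The identity $\psi^\star_m=0$ on $([0,2l]\times\{-1\})\cup(\overline{R_{2l}}\setminus SG_{h^\star})$ is exactly the last sentence recorded before the Lemma, using $\psi^\star=0$ there and the definition of the truncation. On the left edge $\{0\}\times[-1,1]$: since $\psi^\star(0,\cdot)=\widehat\varphi(0,\cdot)=\sqrt{1-w_2^2}$ by \eqref{eqn:psionestarboundary} and \eqref{eq:psi_star_below_cylinder} gives $\psi^\star<\widehat\varphi$ in the interior, the min with $\varphi_m=(\widehat\varphi-\tfrac2m)\vee0$ forces $\psi^\star-\tfrac1m$ to be the larger competitor near the edge only where $\psi^\star>\widehat\varphi-\tfrac1m$, i.e. only on the edge itself; there $\psi^\star_m(0,\cdot)=\varphi_m(0,\cdot)=(\sqrt{1-w_2^2}-\tfrac2m)\vee0$. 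The derivative bound $|\partial_{w_2}\psi^\star_m(0,\cdot)|\le|\partial_{w_2}\varphi(0,\cdot)|$ then holds a.e.\ because $t\mapsto(t-\tfrac2m)\vee0$ is $1$-Lipschitz, so differentiating the composition $w_2\mapsto(\sqrt{1-w_2^2}-\tfrac2m)\vee0$ can only shrink the modulus of the slope.

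For (ii): on $\{0,2l\}\times[-1,1]$ we have $\psi^\star(0,w_2)=\sqrt{1-w_2^2}$ and $\psi^\star_m(0,w_2)=(\sqrt{1-w_2^2}-\tfrac2m)\vee0$ (and symmetrically at $w_1=2l$ using the symmetry of $\psi^\star$ in Theorem \ref{Thm:existenceofminimizer}), so $\|\psi^\star-\psi^\star_m\|_{L^\infty(\{0,2l\}\times[-1,1])}\le\tfrac2m\to0$. For (iii): the inequality $\limsup_m \mathcal A(\psi^\star_m,SG_{h^\star})\le\mathcal A(\psi^\star,SG_{h^\star})$ should come from a dominated-convergence argument — $\psi^\star_m\to\psi^\star$ pointwise a.e.\ and, on the open set where $0<\psi^\star<\varphi_m$ eventually, $\nabla\psi^\star_m=\nabla\psi^\star$, while on the truncation regions the integrand $|\mathcal M(\nabla\psi^\star_m)|=\sqrt{1+|\nabla\psi^\star_m|^2}$ is controlled either by $\sqrt{1+|\nabla\varphi_m|^2}$ (uniformly $L^1$, since $\varphi_m\to\widehat\varphi$ in $W^{1,1}$, or one can bound $|\nabla\varphi_m|\le|\nabla\widehat\varphi|$ a.e.) or equals $1$; one then passes to the limit. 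The reverse inequality $\liminf_m\ge$ is lower semicontinuity of $\mathcal A(\cdot,SG_{h^\star})$ with respect to $L^1$ convergence, which holds since $\psi^\star_m\to\psi^\star$ in $L^1(SG_{h^\star})$ by dominated convergence. Combining the two gives \eqref{eq:lim_m}.

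Finally, for the concluding assertion $\FB(h^\star,\psi^\star_m)\to\FB(h^\star,\psi^\star)$: by \eqref{eqn:FAequalareaofhstar} we have $\FB(h^\star,\psi^\star)=\mathcal A(\psi^\star,SG_{h^\star})=\mathcal A(\psi^\star,\Omegahstar)$. Since $\psi^\star_m=0$ on $\overline{R_{2l}}\setminus SG_{h^\star}$, we have $\mathcal A(\psi^\star_m;\OmAlaa)-\Htwo(\OmAlaa\setminus\Omegah)=\mathcal A(\psi^\star_m,SG_{h^\star})$ exactly as in the derivation of \eqref{eqn:FAequalareaofhstar}; and on $\partial_D\OmAlaa$, $\psi^\star_m=\varphi_m$ on the lateral edges by (i) while $\psi^\star_m=0=\dirdatum$ on the bottom edge, so the boundary terms in \eqref{eqn:FB} contribute $\int_{\{0,2l\}\times[-1,1]}|\psi^\star_m-\dirdatum|\,d\Hone\le2\cdot\tfrac2m\to0$ by (ii) and the final term vanishes since $\psi^\star_m=0$ on $\partial\OmAlaa\setminus\partial_D\OmAlaa\subset\overline{R_{2l}}\setminus SG_{h^\star}$. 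Hence $\FB(h^\star,\psi^\star_m)=\mathcal A(\psi^\star_m,SG_{h^\star})+o(1)\to\mathcal A(\psi^\star,SG_{h^\star})=\FB(h^\star,\psi^\star)$ by (iii). The main obstacle is (iii): one must handle the fact that the Lipschitz constant of $\psi^\star_m$ blows up, so the argument cannot be a naive uniform estimate but must separate the ``coincidence'' region (where gradients match) from the thin truncation collars near $\partial SG_{h^\star}$ (where the graph is nearly horizontal, so the area integrand is close to $1$ and the area of the collar shrinks to zero), together with a uniform $W^{1,1}$ bound on the $\varphi_m$ truncation to license dominated convergence.
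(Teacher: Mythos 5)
Your proposal is correct and follows essentially the same route as the paper: (i) and (ii) directly from the definitions, and (iii) by separating the ``coincidence'' set $A_m=\{0<\psi^\star-\tfrac1m<\varphi_m\}$ (where $\nabla\psi^\star_m=\nabla\psi^\star$) from its thin complement, where $\nabla\psi^\star_m$ is either $0$ or $\nabla\varphi_m$, and then using the pointwise bound $|\nabla\varphi_m|\le|\nabla\widehat\varphi|\in L^1$ to make the integral over the complement vanish. In fact your observation that $|\nabla\varphi_m|\le|\nabla\widehat\varphi|$ a.e.\ is exactly the right domination and is slightly more precise than the paper's phrasing (``uniformly bounded in $L^1$''), which by itself is not a sufficient reason. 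The one cosmetic difference is that you split (iii) into $\limsup\le$ (dominated convergence) plus $\liminf\ge$ (lower semicontinuity), whereas the paper writes the exact identity
$\mathcal A(\psi^\star_m,SG_{h^\star})=\int_{A_m}\sqrt{1+|\nabla\psi^\star|^2}+\int_{SG_{h^\star}\setminus A_m}\sqrt{1+|\nabla\psi^\star_m|^2}$
and passes to the limit in each piece; the $\liminf$ step is therefore redundant, since once you have the $L^1$ dominant $\sqrt{1+|\nabla\psi^\star|^2}+\sqrt{1+|\nabla\widehat\varphi|^2}$ and pointwise convergence a.e., dominated convergence already gives the two-sided limit. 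Your explicit treatment of the boundary terms of $\FB$ in the concluding paragraph is a useful addition: the paper leaves ``As a consequence'' to the reader, while you correctly show that the $\partial_D$ contribution is $O(1/m)$ (via $|\varphi_m-\widehat\varphi|\le\tfrac2m$ on the vertical edges and exact vanishing on the bottom) and the remaining boundary term vanishes.
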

\begin{proof} (i) and (ii) are direct consequences of the definitions. 
To show (iii) 
	we start to observe that 
$\psi^\star_m\rightarrow \psi^\star$ 
pointwise in $R_{2l}$: indeed, 
this follows from the definitions 
of $\varphi^\star_m$ and $\psi^\star_m$ up to noticing that 
$\varphi_m\rightarrow \widehat \varphi$ pointwise in $R_{2l}$ 
as $m\rightarrow +\infty$, 
and $\psi^\star\leq \widehat 
\varphi$ on $R_{2l}$. 
From Theorem \ref{Thm:existenceofminimizer} (iv) it follows that, 
at any point $(w_1,w_2)\in R_{2l}$, 
for $m$ large enough 
$\varphi_m(w_1,w_2)>\psi^\star(w_1,w_2)$ 
(since $\widehat \varphi(w_1,w_2)>\psi^\star(w_1,w_2)$), 
so that 
$\psi^\star_m(w_1,w_2)=\psi^\star(w_1,w_2)-\frac1m$. 
	As a consequence the set $A_m:=\{0<\psi^\star-\frac1m<
	\varphi_m\}$ satisfies 
$$
\lim_{m \to +\infty} \mathcal H^2(SG_{h^\star}\setminus A_m)=0,
$$
and on $A_m$ it holds
 $\psi^\star_m=\psi^\star-\frac1m$ and  $\nabla \psi^\star_m=\nabla \psi^\star$. Moreover,
 on $SG_{h^\star}\setminus A_m$, 
either $\psi^\star_m=0$ (and hence $\nabla \psi_m^\star=0$) or $\psi^\star_m=\varphi_m$ (and hence $\nabla \psi^\star_m=\nabla 
\varphi_m$). Therefore
$$
\int_{SG_{h^\star}\setminus A_m}\sqrt{1+|\nabla\psi^\star_m|^2}~dx
\leq \int_{SG_{h^\star}\setminus A_m}\sqrt{1+|\nabla\varphi_m|^2}~dx
$$
and 
$$
\lim_{m \to +\infty}
\int_{SG_{h^\star}\setminus A_m}\sqrt{1+|\nabla\psi^\star_m|^2}~dx
\leq \lim_{m \to +\infty}
\int_{SG_{h^\star}\setminus A_m}\sqrt{1+|\nabla\varphi_m|^2}~dx
= 0,$$
because $\vert \nabla \varphi_m\vert$ are 
uniformly bounded in $L^1(R_{2l})$.
Also

	\begin{align*}
	\mathcal A(\psi^\star_m, SG_{h^\star})=\int_{A_m}\sqrt{1+|\nabla\psi^\star|^2}~dx+\int_{SG_{h^\star}\setminus A_m}\sqrt{1+|\nabla\psi^\star_m|^2}~dx,
	\end{align*}
and \eqref{eq:lim_m} follows.
\end{proof}

The main result of this paper reads
as follows.

\begin{theorem}[\textbf{Upper bound for the area of the vortex map}]
\label{Thm:maintheorem}
The relaxed area of the graph of the vortex map $\vmap$ satisfies 
\begin{equation}
\label{eq:upper_bound_recovery}
\relarea(\vmap,\BallR)\leq 
\int_{\BallR}\vert \M(\nabla \vmap)\vert ~dx  + \inf \big \{ \FB(\h,\psione):  (\h,\psione) \in  \domalaa
\big \}.
\end{equation}
\end{theorem}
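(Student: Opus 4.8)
The plan is to produce, for every $\eta>0$, a map $u_k\in\mathrm{Lip}(\Omega,\R^2)$ with $u_k\to u$ in $L^1(\Omega,\R^2)$ and $\limsup_k\mathcal A(u_k,\Omega)\le\int_\Omega|\M(\nabla u)|\,dx+\inf\FB+\eta$; letting $\eta\to0$ and diagonalizing then gives \eqref{eq:upper_bound_recovery}. First I would use \eqref{43} to replace $\inf\FB$ by $\inf\{\mathcal A(\psi,SG_h)\}$ and split into the two cases of Theorem \ref{Thm:existenceofminimizer}. If $l>l_0$ the minimizer is $(1,0)$ and the relevant value near the singularity is $\pi$, corresponding to capping the hole of $\mathcal G_u$ over a shrinking disc by the flat unit disc $\{0\}\times\overline B_1$; one of the constructions of Section \ref{sec:three_examples} then yields the bound. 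Hence the substantial case is $l\le l_0$: I fix a minimizer $(h^\star,\psi^\star)$ as in Theorem \ref{Thm:existenceofminimizer}, so that by \eqref{eqn:FAequalareaofhstar} one has $\inf\FB=\mathcal A(\psi^\star,SG_{h^\star})$. Since $\psi^\star$ is only locally Lipschitz, I replace it by the regularization $\psi^\star_m$ of \eqref{def_psi_m} and, using Lemma \ref{lem:properties_of_psi_m} together with a final diagonal argument in $m$, reduce to constructing, for fixed large $m$, a sequence with energy tending to $\int_\Omega|\M(\nabla u)|\,dx+\mathcal A(\psi^\star_m,SG_{h^\star})$. Thus I may assume the profile $\psi:=\psi^\star_m$ is Lipschitz up to $\partial SG_{h^\star}$, vanishes on $([0,2l]\times\{-1\})\cup(\overline{R_{2l}}\setminus SG_{h^\star})$, equals $\varphi_m$ on $\{0,2l\}\times[-1,1]$, and satisfies $|\partial_{w_2}\psi(0,\cdot)|\le|\partial_{w_2}\varphi(0,\cdot)|$; if the convex function $h^\star$ fails to be Lipschitz up to the endpoints I also replace it by a Lipschitz convex symmetric regularization, which perturbs $\mathcal A(\psi,SG_{h^\star})$ by an arbitrarily small amount.

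Next I would fix $\delta_k\downarrow 0$ and set $u_k:=u$ on $\Omega\setminus B_{\delta_k}(0)$; since $u$ is smooth and bounded there, this already accounts for the term $\int_\Omega|\M(\nabla u)|\,dx$ in the limit and forces $u_k\to u$ in $L^1$. The whole difficulty is to define $u_k$ on $B_{\delta_k}(0)$ so that it is Lipschitz, equals $u(x)=x/\delta_k$ on $\partial B_{\delta_k}(0)$, and satisfies $\limsup_k\mathcal A(u_k,B_{\delta_k}(0))\le\mathcal A(\psi,SG_{h^\star})$. To this end I pass to polar coordinates $(r,\alpha)$ in the source and $(\rho,\theta)$ in the target, record the boundary angle by the doubled axial variable of Section \ref{sec:setting_of_the_problem} via $w_1=l\alpha/\pi$ (so that $w_1\in[0,2l]$ sweeps $S^1$ once and the seam $w_1=0$ is glued to $w_1=2l$, consistently since $\psi(0,\cdot)=\psi(2l,\cdot)$), let $w_2$ play the role of a signed target coordinate, and realize the area-minimizing surface $\Sigma$ described by the minimizer — the doubling of the graph of $\psi$ over $SG_{h^\star}$, which carries at one end the unit circle boundary datum and at the other end the multiplicity-two segment — as the graph of $u_k$ over (most of) $B_{\delta_k}(0)$, after an anisotropic rescaling of the source radial direction $r\in[0,\delta_k]$ against the target ``height'' direction. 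The crucial feature of this rescaling is that, in the cylindrical area formula \eqref{eqn:areapolarexpression}, the weight $r^{-2}$ multiplying the angular and Jacobian terms is exactly absorbed, so that the genuinely two-codimensional integrand $|\M(\nabla u_k)|\,r\,dr\,d\alpha$ on $B_{\delta_k}(0)$ collapses, as $k\to\infty$, onto the codimension-one integrand $\sqrt{1+|\nabla\psi|^2}$ defining $\mathcal A(\psi,SG_{h^\star})$; the fold of $\Sigma$ along its multiplicity-two segment is realized as a genuine two-sheeted fold of the Lipschitz map $u_k$, which costs nothing in the limit.

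To make $u_k$ globally Lipschitz I would interpose, inside $B_{\delta_k}(0)$, a thin collar of radial width $o(\delta_k)$ in which $u_k$ interpolates between the rescaled surface and the boundary datum $x/\delta_k$, and similarly handle the transitions between the two mirror halves $\{w_1<l\}$ and $\{w_1>l\}$. That $\psi=\psi^\star_m$ is Lipschitz up to $\partial SG_{h^\star}$, attains its boundary conditions \eqref{eqn:psionestarboundary}, and has $|\partial_{w_2}\psi(0,\cdot)|\le|\partial_{w_2}\varphi(0,\cdot)|$ (Lemma \ref{lem:properties_of_psi_m}(i)) guarantees that these interpolations can be performed with collar area tending to $0$. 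Then, using \eqref{eqn:areapolarexpression} and dominated convergence — the rescaled integrands being dominated by $\sqrt{1+|\nabla\psi|^2}\in L^1(SG_{h^\star})$ after the change of variables — I conclude $\mathcal A(u_k,B_{\delta_k}(0))\to\mathcal A(\psi,SG_{h^\star})$. Adding the annulus contribution, letting $k\to\infty$, then $m\to\infty$ (via Lemma \ref{lem:properties_of_psi_m}), and finally removing the $h^\star$-regularization yields \eqref{eq:upper_bound_recovery}.

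The hard part will be the explicit construction and the area bookkeeping of the middle two paragraphs: choosing the correct anisotropic scaling that matches the $r^{-2}$ factor in \eqref{eqn:areapolarexpression} so that the two-codimensional functional $\mathcal A(\cdot,B_{\delta_k}(0))$ degenerates precisely onto $\mathcal A(\psi,SG_{h^\star})$; coping with the non-Lipschitz behaviour of $\psi^\star$ (and possibly of $h^\star$) near $\partial SG_{h^\star}$ through the regularizations above while keeping all collar and transition regions of negligible area; and verifying that the two-sheeted fold along the multiplicity-two segment, together with the gluings between the symmetric halves and across $\partial B_{\delta_k}(0)$, produce no excess area in the limit. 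The convexity and symmetry of $h^\star$ and the boundary regularity of $\psi^\star$ from Theorem \ref{Thm:existenceofminimizer}, combined with Lemma \ref{lem:properties_of_psi_m}, are exactly the ingredients that make these gluings controllable.
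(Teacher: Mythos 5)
Your reduction steps (passing to \eqref{43}, fixing a minimizer $(h^\star,\psi^\star)$, regularizing to $\psi^\star_m$ via Lemma~\ref{lem:properties_of_psi_m}, diagonalizing in $m$) match the paper's strategy and are fine. But the core construction fails for the interesting case $l\le l_0$, because of a topological obstruction that you have not addressed. You set $u_k:=u$ on $\Omega\setminus B_{\delta_k}(0)$ and require $u_k=u$ on $\partial B_{\delta_k}(0)$. Then $u_k|_{\partial B_{\delta_k}}$ is a Lipschitz loop in $\mathbb S^1$ of winding number $1$, so the topological degree $\deg(u_k,B_{\delta_k},y)$ equals $1$ for every $y\in B_1$. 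By the area/degree formula this forces
\begin{equation*}
\int_{B_{\delta_k}(0)}\vert\det\nabla u_k\vert\,dx \ \ge\ \int_{B_1}\vert\deg(u_k,B_{\delta_k},y)\vert\,dy \ =\ \pi,
\end{equation*}
for every Lipschitz extension of the boundary datum, hence $\mathcal A(u_k,B_{\delta_k}(0))\ge\pi$. Since for $l\le l_0$ the infimum of $\FB$ is strictly smaller than $\pi$, no choice of $u_k$ inside $B_{\delta_k}$ can push the energy down to $\int_\Omega\vert\M(\nabla u)\vert\,dx+\inf\FB$, no matter how cleverly you rescale the Plateau surface $\Sigma$. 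The anisotropic-scaling and collar arguments that occupy your last two paragraphs cannot rescue this: the $\pi$ comes from the Jacobian term alone and is insensitive to rescaling.

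What the paper does, and what your plan is missing, is to modify $u$ not only near the origin but on a shrinking sector $C_k\cup T_k$ of angular width $2\overline\theta_k\to 0$ that extends all the way from the origin to $\partial\Omega$ (formulas \eqref{eq:C_k} and \eqref{eq:T_k}). This is precisely the manifestation of the nonlocality discussed in the Introduction: the optimal vertical current $S_{\mathrm{opt}}$ projects onto a full radius joining $0$ to $\partial\sourcedisk_l(0)$, not onto a small neighbourhood of the singularity. Cutting along this sector ``unwinds'' the degree; as observed in Remark~\ref{rem:below}, the resulting $u_k\res\partial\sourcedisk_t$ has null winding number for every $t\in(0,l)$, so the Jacobian term is not forced to pay $\pi$ and can instead be made to contribute exactly the Plateau area via the change of variables $\mathcal T_k$ mapping $C_k^+\setminus B_{r_k}$ onto $SG_{h^\star_k}\cap\overline R_l$ (see \eqref{eqn:changeofvariableT} and step~7). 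Inside this sector the roles of the coordinates are also opposite to what you propose: $w_1$ is a rescaled \emph{radial} coordinate $\tau_k(r)$ ranging over $[0,l]$, and $w_2$ encodes the rescaled \emph{angular} coordinate through $-s_k(r,\alpha)$, rather than $w_1$ being a rescaled angle as in your $w_1=l\alpha/\pi$. If you want to repair your argument you must abandon the premise $u_k=u$ on $\Omega\setminus B_{\delta_k}$ and replace $B_{\delta_k}$ by a thin sector reaching the outer boundary; once you do that you are essentially reproducing the paper's construction.
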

Notice that by \eqref{43} this result is equivalent to Theorem \ref{teo_intro}.

\section{Some examples}\label{sec:three_examples}
Before going into the details of Theorem \ref{Thm:maintheorem},
it is worth making some nontrivial examples, which are also
useful for the understanding of the proof of the theorem.

\subsection{An approximating sequence of maps with degree zero: cylinder} 
\label{subsec:an_approximating_sequence_of_maps_with_degree_zero:cylinder}
In \cite{AcDa:94} the authors describe 
a sequence $(u_k)$ of Lipschitz maps
converging to $u$ and taking values
in $\mathbb S^1$; 
in our context,
$u_k$ is defined in polar coordinates as follows:
\begin{equation}
	\label{eqn:u_k_Cylinder}
	\veps(r,\alpha):=
	\begin{cases}
		u(r, \alpha) = (\cos \alpha, \sin \alpha) & \textrm{ \qquad 
			in } \Omega_1:=\BallR \setminus (\Balleps \cup \{\alpha \in (-\alpha_k, \alpha_k)\}),\\
		(\cos (\frac{r}{\reps}(\alpha -\pi)+\pi),\sin (\frac{r}{\reps}(\alpha -\pi)+\pi)) & \textrm{ \qquad in }\Balleps \setminus \{\alpha \in (-\alpha_k, \alpha_k)\},\\
		(\cos (\frac{\alpha_k -\pi}{\alpha_k}\alpha +\pi),\sin(\frac{\alpha_k -\pi}{\alpha_k}\alpha +\pi)) & 
		\textrm{ \qquad in } \{\alpha \in [0, \alpha_k)\}\setminus \Balleps,\\
		(\cos (\frac{-\alpha_k +\pi}{-\alpha_k}\alpha +\pi),\sin(\frac{-\alpha_k +\pi}{-\alpha_k}\alpha +\pi)) & 
		\textrm{ \qquad in } \{\alpha \in (-\alpha_k,0)\}\setminus \Balleps,\\
		(\cos( \frac{r}{\reps}(\frac{\alpha_k -\pi}{\alpha_k}\alpha )+\pi),\sin ( \frac{r}{\reps}(\frac{\alpha_k -\pi}{\alpha_k}\alpha )+\pi)) & \textrm{ \qquad in }\Balleps \cap \{\alpha \in [0, \alpha_k)\},\\
		(\cos( \frac{r}{\reps}(\frac{-\alpha_k +\pi}{-\alpha_k}\alpha )+\pi),\sin ( \frac{r}{\reps}(\frac{-\alpha_k +\pi}{-\alpha_k}\alpha )+\pi)) & \textrm{ \qquad in }\Balleps \cap \{\alpha \in (-\alpha_k, \alpha_k)\},
	\end{cases} 
\end{equation}
where $(\reps)$ and $(\alpha_k)$ are two infinitesimal sequences 
of positive numbers; see Fig. \ref{fig:Omega_l_uk}.
Notice that
$u_k(0,0)= (-1,0)
=u_k(r,0)$ for $r \in (0,\longR)$.
Moreover for $t \in (0,\longR)$  we have  $u_k(\partial \sourcedisk_t)
=\partial B_1 \setminus \{\alpha \in (-\alpha_k, \alpha_k) \}$, and the degree
of $u_k$ is zero.

\begin{figure}
	\begin{center}
		\includegraphics[width=0.9\textwidth]{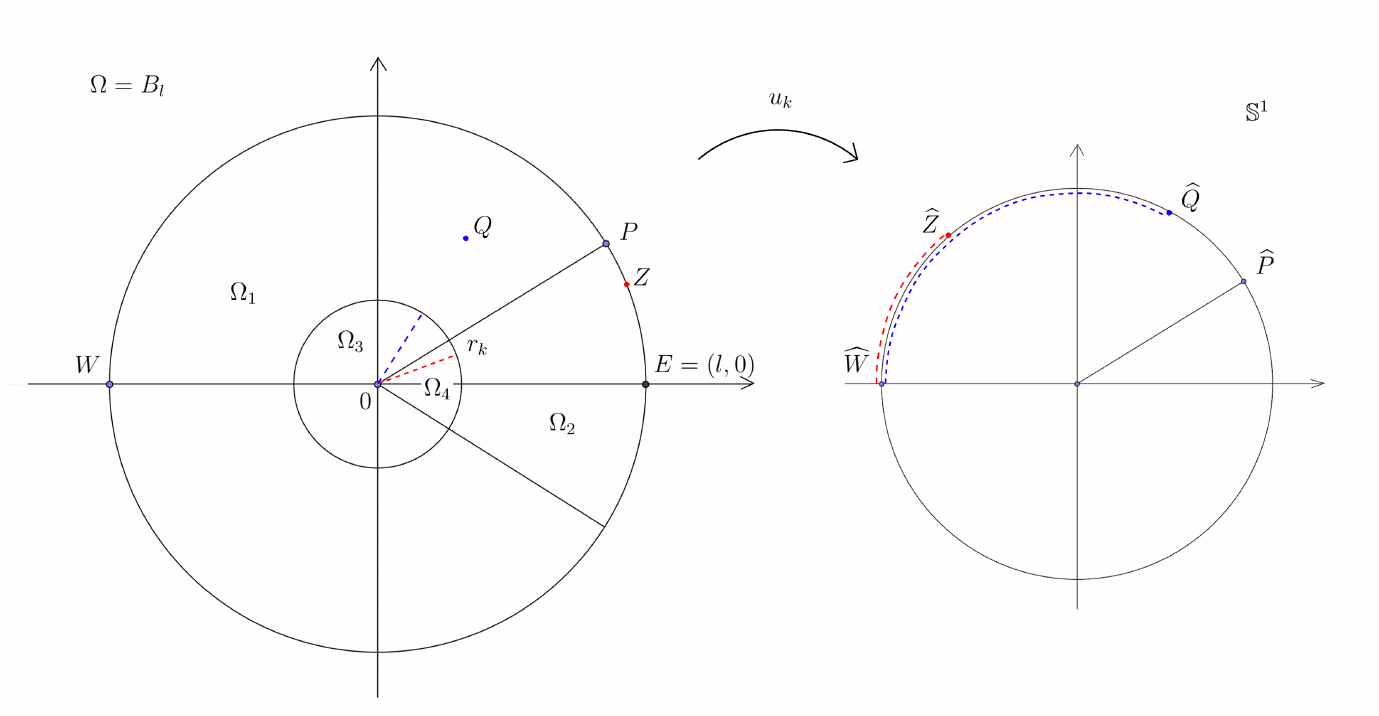}
		\caption{
			The map $u_k$ in \eqref{eqn:u_k_Cylinder}. 
			We set $\hat P := P/\vert P\vert = \alpha_k$, 
			$\hat Q := Q/\vert Q\vert$, 
			$\hat Z := Z/\vert Z\vert$, 
			$\hat W := W/\vert W\vert$. All points in $\Omega_1 \cup \Omega_2$ are retracted
			on $\mathbb S^1$ and suitably interpolated. 			The image of $\Omega_3$ through $u_k$ is as follows:
			$u_k$ sends the generic dotted segment onto the 
			(long) dotted arc on $\mathbb S^1$.
			Finally, the image of $\Omega_4$ through $u_k$ is as follows:
			$u_k$ sends the generic dotted segment onto the 
			(short) dotted arc on $\mathbb S^1$: Thus a short arc centered
at $E/\vert E\vert$ remains uncovered.
		}
		\label{fig:Omega_l_uk}
	\end{center}
\end{figure}

	\begin{remark}
		$(u_k)$ is not a recovery sequence, 
		due to Theorem \ref{Thm:maintheorem}.
		It is proven in \cite{AcDa:94} that 
		$$
		\lim_{k \to +\infty}  
		\area(u_k,\Omega) =  \int_\Om|\mathcal M(\nabla \vortexmap)|~dx
		+2\pi l,
		$$
		and $2 \pi l$ has the meaning of the lateral area of the cylinder of 
		height $\longR$ and basis the unit disc. This surface is not a minimizer
		of the problem on the right-hand side of \eqref{43} 
		(where it corresponds to $h \equiv 1$).
	\end{remark}

	\subsection{A non-optimal approximating sequence of maps: catenoid union a flap}
	\label{sec:catenoid_with_a_flap}
	In this section we 
	discuss another example of a sequence $(u_k)$ converging to $u$. 
	We replace the cylinder lateral surface\footnote{In polar coordinates.} $[0,\longR]\times \{1\}\times(-\pi,\pi]$, which 
	contains the image of 
	$(r_k, l) \times (-\alpha_k,\alpha_k)$ through the map 
	$\Psi_k(x)=(|x|,u_k(x))$ 
	in the 
	example of Section 
	\ref{subsec:an_approximating_sequence_of_maps_with_degree_zero:cylinder}, with 
	half\footnote{
		For convenience, we consider the doubled segment $[0,2l]$,
		in order to define the catenoid; then we restrict
		the construction to $(0,\longR)$.} 
	of a catenoid union a flap (see Fig. \ref{fig:cat_flap}): calling this union $CF \res (0,\longR)\times \R^2$, 
	we have
	$$ CF=: \{(t, \overline\rho(t),\theta): t\in [0,2l],~ \theta \in (-\pi,\pi] \} \cup \{(t,r,0): t\in (0,2l),~r\in [\overline\rho(t),1]\},$$ 
	where $~\overline \rho(t):=a \cosh(\frac{t-l}{a}),$ and $a>0$ is such that $\overline\rho(0)=1$
	(and $\overline \rho(2l)=1$). 

	Notice that $CF$ ``spans'' 
	$\Big(\{0,2l\}\times \{1\}\times(-\pi,\pi]\Big) \cup \Big(  [0,2l]\times \{1\}\times\{0\} \Big)$, 
	which is the union of two unit circles joined by a segment. 
	
	Let $\reps>0, \theta_k>0$, $\overline \theta_k >\theta_k$ be
	such that   $\reps, \theta_k, (\overline \theta_k-\theta_k) \to 0^+$ 
	as $k\to +\infty$.
	Set $$\rho(t):= \overline \rho 
	\left(\frac{t-\reps}{l-\reps}l\right), \qquad t\in (\reps,l).$$
	We define $u_k := u$ in $\Omega \setminus \Big( \sourcedisk_{\reps}  \cup \{\alpha \in (-\overline \theta_k,\overline \theta_k)\}\Big)$,
	in particular
	$$
	u_k(\partial \sourcedisk_t \setminus \{\alpha\in (-\overline\theta_k, 
	\overline\theta_k) \})= \partial B_1 \setminus \{\theta \in (-\overline \theta_k, \overline \theta_k) \}, \qquad t \in (r_k,l).
	$$
	On $ \{\alpha \in (-\overline \theta_k,\overline \theta_k)\} \setminus  \sourcedisk_{\reps} $ we define $u_k$ in such a way that,
 for each $t\in (\reps,l)$, one has 
	\begin{align*}
		u_k\left(\partial \sourcedisk_t \cap 
		\{\pm\alpha\in (\theta_k, \overline{\theta_k}) \}\right)
		&=\partial B_1 \cap  \{\pm\theta \in (0, \overline \theta_k) \} ,\\
		u_k\left(\partial \sourcedisk_t \cap \{\pm\alpha\in (0,\theta_k) \}\right)
		&=\{(r,0)\in \overline {\rm B}_1: r\in [\rho(t),1]\} \cup\Big(\partial B_{\rho(t)} \cap  \{\theta\neq 0 \} \Big).
	\end{align*}
	See Fig. \ref{fig:D_k_in_Catenoid_EX} for a representation of the map $u_k$. The several parts of the image are run so that the winding number around the origin is always null.
	\begin{figure}
		\begin{center}
			\includegraphics[width=0.9\textwidth]{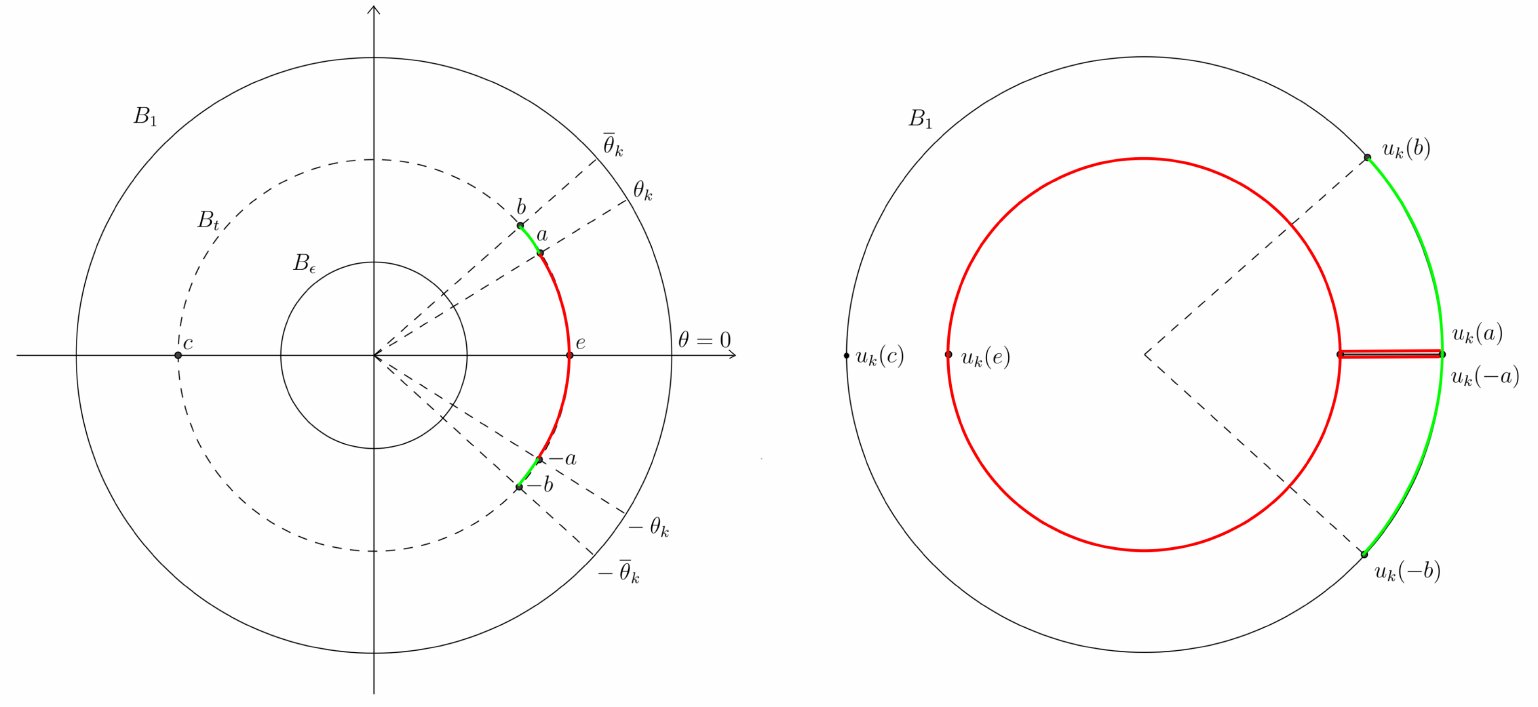}
			\caption{Source and target of the map $u_k$ in the example of Section \ref{sec:catenoid_with_a_flap}. The small interior circle
				in the right figure is a $t$-slice of a catenoid, whereas the horizontal segment is the $t$-section of the flap. The radius of the small circle is $\overline \rho(l)$.}
			\label{fig:D_k_in_Catenoid_EX}
		\end{center}
	\end{figure}
	To define $u_k$ on $\sourcedisk_\reps$ we adopt a construction similar to
	the one in \eqref{eqn:u_k_Cylinder}. First of all, $u_k(0,0):= (-1,0)$.
	Then,
	in 
	$\sourcedisk_\reps \cap \{\alpha\in (-\pi,\pi) \setminus(-\overline \theta_k, \overline \theta_k)\}$ we impose $u_k$ as in 
	\eqref{eqn:u_k_Cylinder} with $\overline \theta_k$ replacing $\alpha_k$. 
	In 
	$\sourcedisk_\reps \cap \{\alpha\in (-\overline \theta_k, \overline \theta_k)\}$ we require 
	$$u_k([0,r_k],\alpha):= \partial B_1 \cap \{\pm \theta \in ((u_k)_2(r_k,\alpha),\pi) \},
	\qquad \pm \alpha \in (0,\pi],
	$$
	where $(u_k)_2$ is the second (angular) coordinate of $u_k$.
 	\begin{figure}
		\begin{center}
			\includegraphics[width=0.4\textwidth]{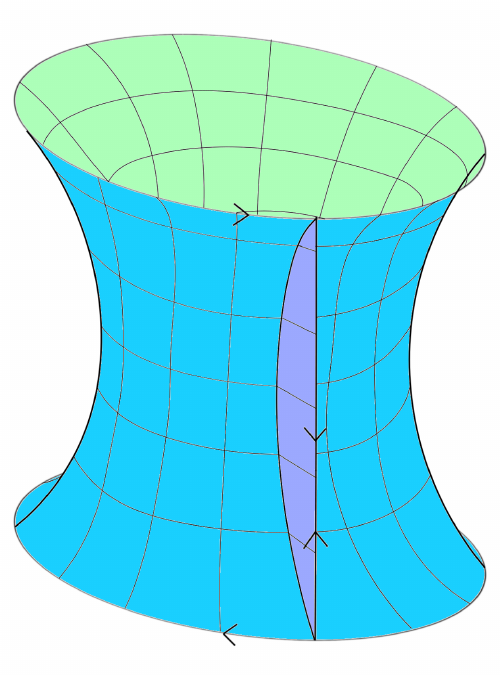}
			\caption{Catenoid union a flap, namely the set $CF$ (Section \ref{sec:catenoid_with_a_flap}). }
			\label{fig:cat_flap}
		\end{center}
	\end{figure}
	Hence 
	\begin{equation*}
		u_k(\partial \sourcedisk_t) \begin{cases}
			\subsetneq\partial B_1\qquad  &\text{if } t \in (0,\reps],\\
			= (\partial B_1) \cup \{(r,0)\in \overline B_1: r\in [\rho(t),1]\} \cup (\partial B_{\rho(t)})\qquad  &\text{if } t \in (\reps,l).
		\end{cases}
	\end{equation*}		
		
		\begin{remark}		Also in this case
			$(u_k)$ is not a recovery sequence, 
			due to Theorem \ref{Thm:maintheorem}, and the results in \cite{BES1,BES2}. For this particular sequence we have 
			$$
			\lim_{k \to +\infty}  
			\area(u_k,\Omega) =  \int_\Om|\mathcal M(\nabla \vortexmap)|~dx
			+\mathcal H^2
			({\rm catenoid})+2\, \mathcal H^2({\rm flap}).
			$$
			This surface is not a minimizer
			of problem on the right-hand side of \eqref{43}. However it is worth noticing that, by minimality property of the catenoid, it can be proved that the set $CF$, treated as an integral current, is $S_{\textrm{min}}$, the minimal vertical current 
closing the graph $\mathcal G_u$  of the vortex map in $\Om$ (see the discussion in the Introduction).
		\end{remark}
		
		\subsection{The case of two discs}
		\label{sec:smoothing_by_convolution}
		In \cite{AcDa:94}, the authors describe
		a sequence $(u_k)$ of maps converging to  
		the vortex map $\vortexmap$,  simply defined as follows:
		\begin{equation}\label{eqn:u_k_two_disks}
			\veps(r,\alpha):=\phi_k(r)u(r,\alpha),
		\end{equation}
		where $\phi_k:[0,\longR]\to [0,1]$ is a smooth function 
		such that $\phi_k=0$ in $[0,\frac{1}{k^2}]$, 
		$\phi_k=1$ in $[\frac{1}{k},l ]$, 
		and $0\leq \phi_k^\prime\leq 2k$. 
In this case
			$(u_k)$ is a recovery sequence for $\longR$ 
			sufficiently large, due to \cite[Lemma 4.2]{AcDa:94}.
			We have 
			$$
			\lim_{k \to +\infty}  
			\area(u_k,\Omega) =  \int_\Om|\mathcal M(\nabla \vortexmap)|~dx
			+\pi,
			$$
			and $\pi$ has the meaning of the area of the unit disc.
			This surface, for $\longR$ sufficiently large, is a minimizer
			of problem on the right-hand side of \eqref{43} 
			(where it corresponds to $h \equiv -1$).

\section{Proof of Theorem \ref{Thm:maintheorem}}\label{sec:final}
In this section we prove Theorem \ref{Thm:maintheorem}.
 To this aim, we need to construct a 
sequence $(\veps) \subset \textrm{Lip}(\BallR, \R^2)$ 
converging to $\vmap$ in $L^1(\BallR,\R^2)$ such that
\begin{equation*}
\lim_{k \to +\infty}\area(\veps, \BallR)\leq
\int_{\BallR}\vert \M(\nabla \vmap)\vert dx + \FB(\hstar,\psi^\star),
\end{equation*}
where $(\hstar, \psi^\star)$ is a pair
minimizing $\FB$ as in Theorem \ref{Thm:existenceofminimizer}. 
We may assume that $h^\star$ is not identically
$-1$, otherwise the result follows from \cite{AcDa:94} (and a recovery sequence is provided as in \eqref{eqn:u_k_two_disks}).

We will specify various subsets of $\BallR$ and define the sequence 
$(\veps)$ on each of these sets (see Fig. \ref{fig:picture_alaa_last_section}). More precisely, we will define $u_k$ 
as a map taking values in $\mathbb S^1$ in the largest sector (step 1). This construction is similar 
to the one in \cite{AcDa:94} (see also Remark \ref{rem:below} below). The contribution of the area in this sector will equal, as $k\rightarrow
+\infty$, the first term in \eqref{eq:upper_bound_recovery}. The second term 
will be instead provided by the contribution of $u_k$ in region $C_k\setminus B_{r_k}$ (step 2), where we will need the aid of 
the functions $(\hstar, \psi^\star)$  (suitably regularized, in order to render $u_k$ Lipschitz continuous). The other regions surrounding $C_k\setminus B_{r_k}$ are needed to glue $u_k$ between the aforementioned regions. This is done in steps 3, 4 and 5, 
where it is also proven that the corresponding 
area contribution is negligible. Finally, in steps 6 and 7 
we show the crucial estimates to prove \eqref{eq:upper_bound_recovery}. 
In Fig. \ref{fig:picture_alaa_last_section} 
this subdivion of the domain $\Omega$ is drawn.

\begin{Remark}\label{rem:below}\rm
Our construction 
differs from the one in \cite{AcDa:94}, even when in place of 
$(\hstar,\psi^\star)$ we use $(1, \sqrt{1-s^2})$ 
(\textit{i.e.},
the one in Section \ref{subsec:an_approximating_sequence_of_maps_with_degree_zero:cylinder})
in the following sense.
We use the full 
graph of $\pm\psi^\star$ to construct 
$u_k$ and therefore, in the case  
when $(\hstar,\psi^\star)$ is replaced by $(1, \sqrt{1-s^2})$, 
the image of $u_k$ covers the whole cylinder and not
only a part of it. Since $h^\star$ {\it may be
not identically $1$} (and actually is not explicit in general), 
the presence of a new set $T_k$
is now needed, as an intermediate region to glue the 
trace of $u_k$ along the two segments $\{\alpha=  \pm \overline\theta_k\}$.
The image set $u_k(T_k)$ covers a small
part of the unit circle.
See Fig. \ref{fig:picture_alaa_last_section},
where $T_k$ is represented as the union of the two 
thin sectors in $\Om$. To glue all the piecese in order that $u_k$ is Lipschitz, it will be useful to have two transition regions, one in a ball $B_{r_k/2}$ and one in the annulus $B_{r_k}\setminus B_{r_k/2}$.
It is worth noticing that the curve $u_k\res\partial {\rm B}_t$ has null winding number around the origin, for all $t\in(0,l)$. 
\end{Remark}
Let $k \in \mathbb N$ and let $(\reps),(\thetaeps), (\thetaepsbar)$ 
be infinitesimal sequences of positive numbers
such that $\thetaepsbar-\thetaeps=:\deltaeps>0$. 
We suppose\footnote{This assumption is used only in step 7.} 
\begin{equation}\label{eq:k_theta_k_to_zero}
\lim_{k \to +\infty}(\theta_k k)=0.
\end{equation}
Let  $\Balleps$ be the 
open disc centered at the origin with radius $\reps$, and 
\begin{equation}\label{eq:C_k}
\Coneeps:=\{(r,\alpha)\in 
[0,\longR) \times [0,2\pi):
\alpha \in [0,\thetaeps]\cup[2\pi-\theta_k,2\pi)\},
\end{equation}
be the half-cone in $\Omega$, 
with vertex at the origin and aperture equal to $2\thetaeps$, see
Fig. \ref{fig:picture_alaa_last_section}. 
We set 
$$
C_k^+:=C_k\cap\{\alpha\in[0,\theta_k]\}, \quad
C_k^-:=C_k\cap\{\alpha\in[2\pi-\theta_k,2\pi]\},
$$
and we divide $\Coneeps\cap (\BallR\setminus\Balleps)$ into two sets
\begin{equation}\label{eq:C_k_plus_minus}
\begin{aligned}
C_k\setminus \Balleps:=
\big(C_k^+ \setminus\Balleps\big)\cup\big(
C_k^- \setminus\Balleps\big).
\end{aligned}
\end{equation}
\begin{figure}
\begin{center}
    \includegraphics[width=0.9\textwidth]{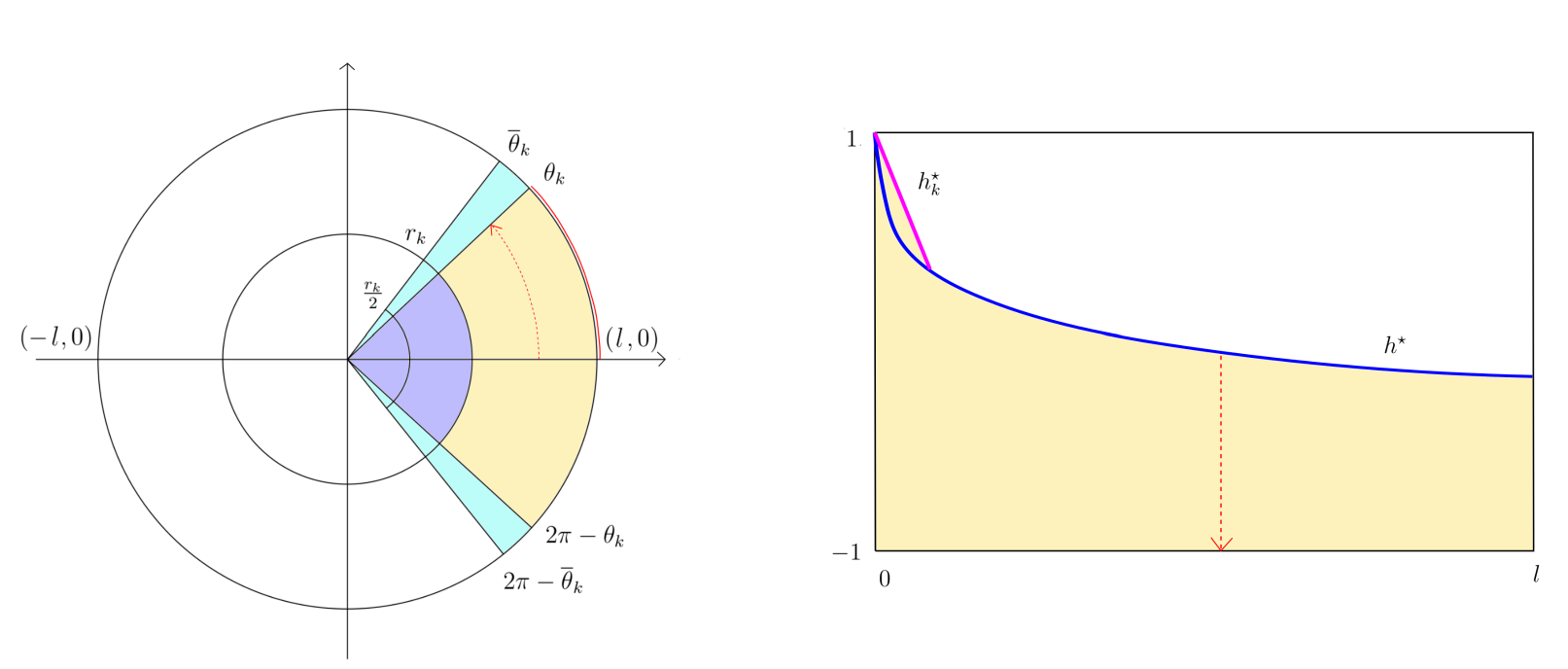}
\caption{
On the left the subdivision of $\sourcedisk_l$ in sectors. 
Specifically, the sectors 
$C_k^+\setminus \Balleps$ 
and $C_k^-\setminus \Balleps$ are emphasized in 
light grey. The map $\mathcal T_k$ defined
	in \eqref{eqn:changeofvariableT} sends $C_k^+\setminus \Balleps$ 
 in the (reflected) subgraph of $h^\star_k$ in $R_\longR$, 
depicted on the right; it maps the segment joining $(r_k,0)$ to $(1,0)$ onto the graph of $h^\star_k$, and the radius corresponding to $\alpha=\theta_k$ onto the basis of $R_l$, following the orientation emphasized by the dashed arrow. The graph of $h^\star_k$ starts 
linearly from the point $(0,1)$ in the interval $[0,1/k)$ with negative derivative,
then joins
(and next coincides) 
 with the graph of $h^\star$.  The definition of  $u_k$ in $C_k^+\setminus \Balleps$
 makes use of this parametrization of $SG_{h^\star_k} \cap 
\overline R_\longR$ (see \eqref{vepsonCeps}). This parametrization needs a reflection,
in order to glue $u_k$ on the 
horizontal segment $\{\alpha=0\}$ with the definition of $u_k$ 
in $C_k^-\setminus \Balleps$. 
}
\label{fig:picture_alaa_last_section}
\end{center}
\end{figure}
Finally,
let  
\begin{equation}\label{eq:T_k}
\Teps:=\{(r,\alpha)\in [0,\longR) \times [0,2\pi):\alpha \in [\thetaeps,\thetaepsbar]\cup[2\pi-\thetaepsbar,2\pi-\thetaeps]\}.
\end{equation} 
{\it Step 1.} 
Definition of $\veps$ on $\overline{\BallR \setminus ( \Coneeps \cup \Teps)}$.

In this step our 
construction is similar to the one in \cite[Lem. 5.3]{AcDa:94},
see also \eqref{eqn:u_k_Cylinder}; in order to define $u_k$, 
in the source we use polar  
coordinates $(r,\alpha)$ and Cartesian coordinates in the target.
 Define
\begin{equation}\label{eqn:vepsoutofthecone}
\veps(r,\alpha):=
\begin{cases}
      \vmap(r,\alpha) = (\cos \alpha, \sin \alpha), & 
r \in (r_k/2, \longR), \alpha \in [\overline \theta_k, 2\pi - 
\overline \theta_k],
\\
     \Big(\cos (\frac{2r}{\reps}(\alpha -\pi)+\pi),
\sin (\frac{2r}{\reps}(\alpha -\pi)+\pi)\Big), & 
r \in [0,r_k/2], \alpha \in [\overline \theta_k, 2\pi - \overline 
\theta_k].
    \end{cases} 
\end{equation}
Obviously
\begin{align}
&\veps(0,0)=(-1,0) = u_k(r,\pi), \qquad r \in [0,\longR),
\nonumber
\\
&\veps(r,\thetaepsbar)=(\cos \thetaepsbar, \sin  \thetaepsbar), \qquad 
\veps(r,2\pi-\thetaepsbar)=(\cos \thetaepsbar, \sin  (-\thetaepsbar)),
\qquad r \in (\reps/2,l),
\nonumber
\\
&\veps(r,\thetaepsbar)=\Big(
\cos(\frac{2r}{\reps}(\thetaepsbar -\pi)+\pi)~,~ 
\sin (\frac{2r}{\reps}(\thetaepsbar -\pi)+\pi)\Big),
\qquad r \in [0,\reps/2],
\nonumber
\\
&\veps(r,2\pi-\thetaepsbar)=\Big(
\cos(\frac{2r}{\reps}(\pi-\thetaepsbar )+\pi)~,~ 
\sin (\frac{2r}{\reps}(\pi-\thetaepsbar)+\pi)\Big),\qquad 
r \in [0,\reps/2].
\label{eqn:vepsboundaryinBeps}
\end{align}

\smallskip
The relevant contribution to the area of the graph of $u_k$ is the one
in region $C_k$, and more specifically in $C_k \setminus B_{r_k}$;
it is in this region that we need to use a minimizing pair of $\FB$.

{\it Step 2.} 
Definition of $\veps$ on $\Coneeps\setminus \Balleps$.

We first need a regularization of $h^\star$:  assuming without 
loss of generality $1/k < \longR$,
we define 
\begin{equation}\label{def_h_k}
h^\star_k (w_1):=
\begin{cases}
h^\star(w_1) & {\rm for~  }w_1\in [\frac1k,\longR],
\\
k\left(h^\star (\frac1k)- \hstar(0)\right)w_1+\hstar(0)& {\rm for~  }w_1\in [0,\frac1k),
\end{cases}
\end{equation}
where we recall that $h^\star(0)=1$ (see Theorem \ref{Thm:existenceofminimizer}), 
and we set $h^\star_k(w_1):=h^\star_k(2\longR -w_1)$ for $w_1\in 
[\longR,2\longR]$ (see Fig. \ref{fig:picture_alaa_last_section}, right). Notice that $\hstark(0)=1$, 
$\h^\star_k \in \Lip([0,2\longR])$ and
the convexity of $h^\star$ implies that also $h^\star_k$ is convex,
$h^\star_k\geq h^\star$, and therefore by Lemma \ref{lem:properties_of_psi_m} (i) we see that
$(h^\star_k, \psi^\star_k) 
\in X_{2\longR}^{{\rm conv}}$,
where 
$\psi^\star_k$ is the approximation of $\psi^\star$ considered in Lemma 
\ref{lem:properties_of_psi_m} (with $k=m$), see formula
\eqref{def_psi_m}.
Again by Lemma
\ref{lem:properties_of_psi_m}, 
$\FB(h^\star_k,\psi^\star_k)=
\FB(h^\star,\psi^\star_k)+ \int_0^{2\longR} 
\left( h^\star_k(w_1) -\h^\star(w_1) \right)~dw_1\rightarrow \FB(h^\star,\psi^\star)$ as $k\rightarrow +\infty$. 

We start with the construction of $u_k$ on $C_k^+\setminus \Balleps$. Set
\begin{align}
&\teps:[\reps,\longR]\to[0,\longR],&\qquad &\teps(r):=\frac{\longR}{\longR-\reps}(r-\reps), \label{eqn:teps}\\
&\seps:[\reps,\longR]\times[0,\thetaeps]\to [-1,1],&\qquad &\seps(r,\alpha):=\frac{1 + \hstark(\teps(r))}{\thetaeps}\alpha -\hstark(\teps(r)).\label{eqn:seps}
\end{align}
Note that $\seps(r,\cdot):[0,\thetaeps]\to [-\hstark(\teps(r)),1]$ is a 
bijective increasing function, for any $r \in [r_k, \longR]$, 
and 
\begin{align}
&\seps(r,0)=-\hstark(\teps(r)) \quad{\rm for~any}~ 
r \in [r_k,\longR],  \text{ in particular }\seps(\reps,0)=-1,
\label{eqn:sepsboundarythetazero}
\\
&\seps(r,\thetaeps)=1, \qquad r\in[\reps,l]
\label{eqn:sepsboundarythetaeps},
\\
& \seps(r_k,\alpha)=\frac{2\alpha}{\theta_k} -1, \qquad \alpha\in[0,\theta_k].
\label{eq:s_k_r_k_theta}
\end{align}
We have, for all $r\in[\reps,l]$ and $\alpha\in[0,\theta_k]$,
\begin{align}
&
\teps'(r)
=\frac{\longR}{\longR-\reps},\label{eqn:tepsderiv}\\
&\partial_\alpha \seps(r,\alpha)=\frac{1 +\hstark(\teps(r))}{\thetaeps},
\label{eqn:sepsthetaderiv}
\end{align}
and, for almost every $r\in [\reps,l]$ and
all $\alpha\in[0,\theta_k]$, 
\begin{align}
\partial_r \seps(r,\alpha) =\left(\frac{\alpha}{\thetaeps}-1\right)
\teps'(r)  \hstark'(\teps(r)) = 
\frac{\longR}{\longR-\reps}
\left(\frac{\alpha}{\thetaeps}-1\right)  
\hstark'(\teps(r)).
\label{eqn:sepsrderiv}
\end{align}
Moreover  we define
\begin{equation}\label{eqn:rfn}
\invr : [0,\longR]\to[\reps,\longR], \qquad
\invr(\tcoord) :=\frac{\longR-\reps}{\longR}\axialcoordofcylinder+\reps ~
\end{equation}
to be the inverse of $\tau_k$ and, recalling that 
$\overline R_l = [0,\longR] \times [-1,1]$, 
\begin{equation}
\label{eqn:thetafn} 
\invtheta
:\Omegahstark \cap \overline R_l  \to [0,\thetaeps],
\qquad
\invtheta(\axialcoordofcylinder,\scoord) :=\frac{\thetaeps}{1 + \hstark (\axialcoordofcylinder)}
(\hstark(\axialcoordofcylinder)-\scoord).
\end{equation}
Notice that 
$\invtheta(\axialcoordofcylinder,\cdot):
[-1,\hstark(\axialcoordofcylinder)]
\to [0,\thetaeps]$ is a linearly decreasing bijective function\footnote{We recall that in our hypothesis $h_k^* > -1$  by Theorem \ref{Thm:existenceofminimizer} (i).}
	
The map 
\begin{equation}\label{eqn:changeofvariableT}
\mathcal T_k:C_k^+\setminus \Balleps
\to 
\Omegahstark
\cap 
\overline R_l, 
\qquad \mathcal T_k(r,\alpha):=(\teps(r),-\seps(r,\alpha)),
\end{equation} 
is invertible, and its inverse is the map
\begin{equation}\label{eqn:changeofvariableTinverse}
\mathcal T_k^{-1}:
\Omegahstark\cap 
\overline R_l
\to C_k^+\setminus \Balleps, \qquad 
\mathcal T_k^{-1}(\axialcoordofcylinder,\scoord)
:=(\invr(\axialcoordofcylinder),\invtheta(\axialcoordofcylinder,\scoord)).
\end{equation} 
The modulus of the determinant of the Jacobian of $\mathcal T_k^{-1}$ is given by 
\begin{equation}\label{eqn:changeofvariabledeterminant}
\vert J_{\mathcal T_k^{-1}}\vert=\left( 
\frac{\longR -\reps}{\longR}\right)
\frac{\thetaeps}{1 +\hstark(\axialcoordofcylinder)}.
\end{equation}
We set 
\begin{equation}\label{vepsonCeps}
\veps(r,\alpha)
:=\Big(\seps(r,\alpha),\psionestar_k
\big(\mathcal T_k(r,\alpha)\big)\Big) =
\Big(u_{k1}(r,\alpha),
u_{k2}(r,\alpha)\Big), \qquad 
r \in [r_k, l], \alpha \in [0,\theta_k].
\end{equation}
Observe that, using the definition of $\psi_k^\star$,
\begin{equation}\label{eq_13.28} 
\begin{aligned}
&\veps \in {\rm Lip}(C_k^+\setminus \Balleps, \R^2),
\\
&\veps(r,\thetaeps)=(\seps(r,\thetaeps), 
\psi_k^\star(\mathcal T_k(r, \theta_k)))=(1,0),
\\
&\veps(r,0)=(-\hstark(\teps(r)), \psionestar_k(\teps(r),\hstark(\teps(r))))
=(-\hstark(\teps(r)),0),
\\
&\veps(\reps,\alpha)
=(\seps(\reps,\alpha),\psionestar_k(0,-\seps(\reps,\alpha)))
=(\seps(\reps,\alpha),\varphi_k(0,-\seps(\reps,\alpha))),
\end{aligned}
\end{equation}
for $r \in [r_k, l]$ and $\alpha \in [0, \theta_k]$, 
as it follows from
\eqref{eqn:teps}, 
\eqref{eqn:sepsboundarythetazero},  \eqref{eqn:sepsboundarythetaeps}, and \eqref{eqn:psionestarboundary},
where $\varphi_k$ is defined in \eqref{def_varphi_m} (with $k=m$).

Eventually we define $u_k$ on $C_k^-\setminus \Balleps$ as
\begin{align}\label{vepsonCeps-}
\veps(r,\alpha)
:=
(u_{k1}(r,2\pi-\alpha),
-u_{k2}(r,2\pi-\alpha)),
\qquad
r \in [r_k, l], \alpha \in [2\pi-\theta_k, 2\pi). 
\end{align}
It turns out
\begin{align*}
&\veps  \in {\rm Lip}(C_k^-\setminus \Balleps, \R^2),
\\
&\veps(r,2\pi-\thetaeps)=(1,0),
\\
&\veps(r,2\pi)=(-\hstark(\teps(r)), -\psionestar_k(\teps(r),\hstark(\teps(r))))
=(-\hstark(\teps(r)),0),
\\
&\veps(\reps,\alpha)=(\seps(\reps,2\pi-\alpha), -\psionestar_k(0,-\seps(\reps,2\pi-\alpha))),
\end{align*}
for $r\in[r_k,l]$, $\alpha\in[2\pi-\theta_k,2\pi)$.

The area of the graph of $u_k$ on $C_k \setminus B_{r_k}$
will be computed in step 7. 
\smallskip

{\it Step 3.} 
Definition of $\veps$ on $\Coneeps\cap(\overline B_{r_k}\setminus {\rm B}_{r_k/2})$
and its area contribution.  

Let $G_{\psi^\star_k(0,\cdot)}\subset\R^2$ (resp.
$G_{\psi^\star(0,\cdot)}\subset\R^2$)
denote the graph of $\psi^\star_k(0,\cdot)$ (resp. 
of $\psi^\star(0,\cdot)$)
on $[-1,1]$.
We 
introduce the retraction map
$\Upsilon:(\R\times [0,+\infty))\setminus O
\subset \R^2_{{\rm target}}
\rightarrow  G_{\psi^\star(0,\cdot)}\subset \R^2_{{\rm target}}$, $O=(0,0)$, defined by
\begin{align*}
\Upsilon(p)=q:=G_{\psi^\star(0,\cdot)}\cap \ell_{Op}\qquad \forall p\in (\R\times [0,+\infty))\setminus O,
\end{align*}
where $\ell_{Op}$ is the line passing through 
$O$ 
 and $p$. Then  $\Upsilon$ is well-defined and it is 
Lipschitz continuous in a neighbourhood of $G_{\psi^\star(0,\cdot)}$ in $\R\times [0,+\infty)$. 
We also define 
$$
\Upsilon_k:G_{\psi^\star_k(0,\cdot)}\rightarrow G_{\psi^\star(0,\cdot)}
$$
as the 
restriction of $\Upsilon$ to $G_{\psi^\star_k(0,\cdot)}$;
see Fig. \ref{fig:graphof_h}.
 As a consequence, since for $k \in \NN$ large enough $G_{\psi_k^\star(0,\cdot)}$ 
is contained in a neighbourhood of $G_{\psi^\star(0,\cdot)}$, 
we have that $\Upsilon_k$ is Lipschitz continuous with Lipschitz constant 
independent of $k$.
Notice also that $\Upsilon_k((-1,0))=(-1,0)$ and $\Upsilon_k((1,0))=(1,0)$. 

We define $u_k$ on $C_k^+\cap (\overline {\rm B}_{r_k}\setminus {\rm B}_{r_k/2})$ setting, 
for $r\in[\frac{r_k}{2},r_k]$ and $\alpha\in[0,\theta_k]$,
\begin{align*}
u_k(r,\alpha):=\Big(2-\frac{2r}{r_k}\Big)
\Upsilon_k\big(s_k(r_k,\alpha),\psi_k^\star(0,-s_k(r_k,\alpha))\big)+
\Big(\frac{2r}{r_k}-1\Big)\big(s_k(r_k,\alpha),\psi_k^\star(0,-s_k(r_k,\alpha))\big).
\end{align*}
We have
$$
u_k(r_k,\alpha)=(s_k(r_k,\alpha),\psi_k^\star(0,-s_k(r_k,\alpha))),
$$
so that $u_k$ glues, on $C_k^+ \cap \partial {\rm B}_{r_k}$, 
 with the values obtained in step 2 (last formula in \eqref{eq_13.28}), and
$$
u_k(r,\theta_k)=(1,0),\qquad u_k(r,0)=(-1,0).
$$
This formula shows that $u_k$ also glues, 
on $C_k^+ \cap \{(r,\alpha): r \in [r_k/2,r_k], \alpha \in \{0,\theta_k\}\}$, 
with the values obtained in step 2 (second and third formula in \eqref{eq_13.28}). Moreover
\begin{equation}\label{eq:step_3_r_k_2}
	u_k(r_k/2,\alpha)=\Upsilon_k\big(s_k(r_k,\alpha),\psi_k^\star(0,-s_k(r_k,\alpha))\big),
\qquad \alpha \in [0,\theta_k].
\end{equation}
In addition, using \eqref{eq:s_k_r_k_theta},
the derivatives of $u_k$ satisfy, 
for $r\in(\frac{r_k}{2},r_k)$ and $\alpha\in(0,\theta_k)$, 
\begin{align*}
&\partial_ru_k(r,\alpha)=-\frac{2}{r_k}\Upsilon_k\big(s_k(r_k,\alpha),\psi_k^\star(0,-s_k(r_k,\alpha))\big)+\frac{2}{r_k}\big(s_k(r_k,\alpha),\psi_k^\star(0,-s_k(r_k,\alpha))\big),\\
&\partial_\alpha u_k(r,\alpha)=
\Big(2-\frac{2r}{r_k}\Big)\nabla \Upsilon_k\big(s_k(r_k,\alpha),\psi_k^\star(0,-s_k(r_k,\alpha))\big)
\cdot\Big(\frac{2}{\theta_k},-\frac{2}{\theta_k}\partial_{w_2}\psi_k^\star(0,-s_k(r_k,\alpha))\Big)\nonumber\\
&\qquad\qquad\qquad +\Big(\frac{2r}{r_k}-1\Big)\Big(\frac{2}{\theta_k},-\frac{2}{\theta_k}\partial_{w_2}
\psi_k^\star(0,-s_k(r_k,\alpha))\Big),
\end{align*}
so that 
\begin{align*}
&|\partial_ru_k(r,\alpha)|\leq \frac{4}{r_k},\\
&|\partial_\alpha u_k(r,\alpha)|\leq 
\frac{2(\widehat C+1)}{\theta_k}(|\partial_{w_2}
\psi_k^\star(0,-s_k(r_k,\alpha))|+1),
\end{align*}
where $\widehat C$ is a positive constant independent of $k$, which bounds the gradient of $\Upsilon_k$.
Since $\psi_k^\star$ 
is Lipschitz, we deduce that $u_k$ is Lipschitz 
continuous\footnote{The Lipschitz constant of $u_k$ on this set turns out to be unbounded with respect to $k$.} 
on $C_k^+\cap(\Balleps\setminus {\rm B}_{r_k/2})$.

Furthermore the image of $(\frac{r_k}{2},r_k)\times (0,\theta_k)$ through the map $(r,\alpha)\mapsto u_k(r,\alpha)$
is  the 
region enclosed by $G_{\psi^\star_k}$ and $G_{\psi^\star}$ (with multiplicity $1$). 
The area of this region is infinitesimal as $k\rightarrow +\infty$, 
so that, by the area formula,
$$\int_{r_k/2}^{r_k}\int_0^{\theta_k}r|Ju_k(r,\alpha)|d\alpha dr=o(1)
\qquad
{\rm as~} k\rightarrow +\infty.
$$
Hence, using the fact that the gradient in polar
coordinates is $(\partial_r,\frac1r\partial_\alpha)$, 
we eventually estimate 
\begin{align}\label{estimate_area_step3}
\int_{r_k/2}^{r_k}\int_0^{\theta_k}r|\mathcal M(\nabla u_k)|d\alpha 
dr&\leq 
\int_{r_k/2}^{r_k}\int_0^{\theta_k}\Big(r+\frac{4r}{r_k}+\frac{C}{\theta_k}|\partial_{w_2}
\psi_k^\star(0,1-\frac{2\alpha}{\theta_k})|+\frac{C}{\theta_k}\Big)~d\alpha dr+o(1),
\nonumber
\\
&=o(1)+C\frac{r_k}{2\theta_k}\int_0^{\theta_k}|\partial_{w_2}\psi_k^\star(0,1-\frac{2\alpha}{\theta_k})|d\alpha=o(1)
\end{align}
as $k\rightarrow +\infty$.
In the last equality we use that $|\partial_{w_2}\psi^\star_k(0,\cdot)|
\leq|\partial_{w_2}\psi^\star(0,\cdot)|$, which is integrable via the change of variables $w_2=1-\frac{2\alpha}{\theta_k}$ (it also makes $\theta_k$ disappear at the denominator in front of the integral 
in \eqref{estimate_area_step3}).

This proves that 
the contribution of area of the graph of 
$u_k$ over $C_k^+\cap(\Balleps\setminus {\rm B}_{r_k/2})$ is infinitesimal as $k\rightarrow +\infty$.

\begin{figure}
	\centering
	\includegraphics[scale=0.6]{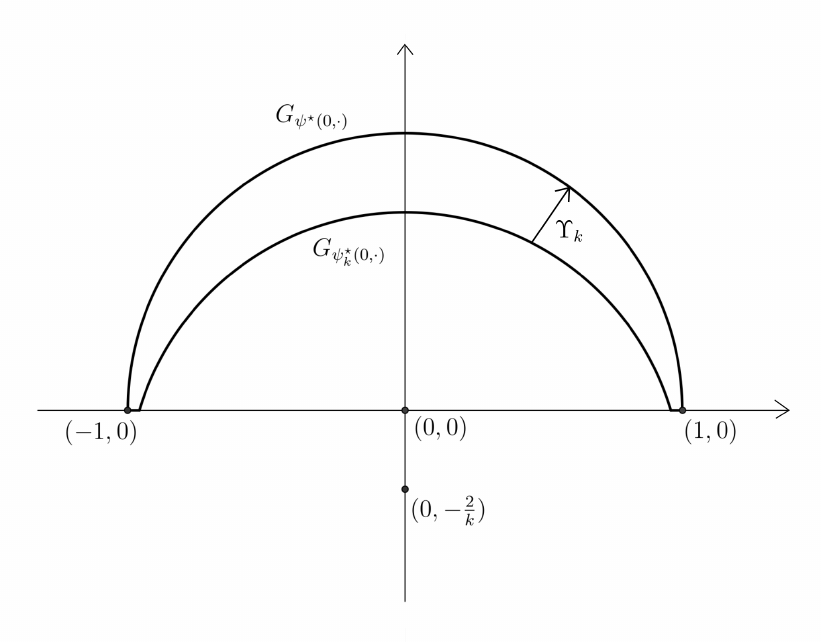}
	\caption{the graphs of the functions $\psi^\star_k(0,\cdot)$ 
and $\psi^\star(0,\cdot)$; these contain arcs of circle centered at $(0,0)$ and $(0,-\frac2k)$ respectively. The map $\Upsilon_k$ is emphasized,
and turns out to be the restriction of $x\mapsto \frac{x}{|x|}$ on $\psi^\star_k(0,\cdot)$.}
	\label{fig:graphof_h}
\end{figure}

Eventually, for $r\in[r_k/2,r_k]$, $\alpha\in[2\pi-\theta_k,2\pi)$, we set
\begin{align}
u_k(r,\alpha):=(u_{k1}(r,2\pi-\alpha),-u_{k2}(r,2\pi-\alpha)).
\end{align}
Observe that, thanks to \eqref{vepsonCeps-}, $u_k$ is continuous on $\partial {\rm B}_{r_k}$, and 
similar estimates as in \eqref{estimate_area_step3} hold on $(\sourcedisk_{r_k}
\setminus \sourcedisk_{r_k/2})\cap C_k^-$.
\smallskip

{\it Step 4.} Definition of $\veps$ on $\Coneeps\cap \sourcedisk_{r_k/2}$
and its area contribution.

We start with the construction of $u_k$ on $C_k^+\cap {\rm B}_{r_k/2}$. 
For $r\in[0,r_k/2]$  and $\alpha\in [0,\theta_k]$ we set
\begin{align}\label{u_k_step4}
u_k(r,\alpha):=\Upsilon_k\Big(\frac{4r\alpha}{r_k\theta_k}-1,\psi_k^\star
\big(0,1-\frac{4r\alpha}{r_k\theta_k}\big)\Big).
\end{align}
First we observe that
$$
u_k\Big(\frac{r_k}{2},\alpha\Big)=\Upsilon_k\Big(\frac{2\alpha}{\theta_k}-1,\psi_k^\star
\big(0,1-\frac{2\alpha}{\theta_k}\big)\Big), \qquad \alpha \in (0,\theta_k),
$$
so that  $u_k$ is continuous on $C_k^+ \cap
\partial {\rm B}_{r_k/2}$ (see \eqref{eq:step_3_r_k_2} and \eqref{eq:s_k_r_k_theta}), and  
\begin{align}\label{eq:u_k_lip_cont}
&u_k(r,\theta_k)=\Upsilon_k\Big(\frac{4r}{r_k}-1,\psi_k^\star(0,1-\frac{4r}{r_k})\Big),
\\
&u_k(r,0)=(-1,\psi_k^\star(0,1))=(-1,0).
\end{align}
Direct computations lead to the following estimates:
\begin{align}
&|\partial_ru_k(r,\alpha)|\leq \widehat C\frac{4\alpha}{r_k\theta_k}\Big(1+|\partial_{w_2}\psi^\star_k(0,1-\frac{4\alpha r}{r_k\theta_k})|\Big),\\
&|\partial_\alpha u_k(r,\alpha)|\leq \widehat C\frac{4r}{r_k\theta_k}\Big(1+|\partial_{w_2}\psi^\star_k(0,1-\frac{4\alpha r}{r_k\theta_k})|\Big),
\end{align}
where $\widehat C$ is the constant bounding the gradient of $\Upsilon_k$ as in 
step 3.
Finally, since by \eqref{u_k_step4} $u_k$ takes values in $\mathbb S^1\subset\R^2$, we have
$Ju_k(r,\alpha)=0$ for all $r\in (0,r_k/2)$, $\alpha\in[0,\theta_k]$. 
Hence, the area of the graph of $u_k$ on $C_k^+\cap {\rm B}_{r_k/2}$ is 
\begin{align*}
\int_0^{r_k/2}\int_0^{\theta_k}r|\mathcal M(\nabla u_k)(r,\alpha)|~d\alpha 
dr\leq \int_0^{r_k/2}\int_0^{\theta_k} (r+C) 
+\frac{C}{\theta_k}+\frac{Cr}{r_k}(1+\frac{1}{\theta_k})|\partial_{w_2}\psi^\star_k(0,1-\frac{4\alpha r}{r_k\theta_k})|d\alpha dr,
\end{align*}
where 
$C$ is a positive constant independent of $k$. Exploiting 
that $|\partial_{w_2}\psi^\star_k(0,\cdot)|\leq|\partial_{w_2}\psi^\star(0,\cdot)|$, we can 
estimate the right-hand side of the previous formula as follows:
\begin{equation}
\label{estimate_area_step4}
\begin{aligned}
&C\int_0^{r_k/2}\int_0^{\theta_k}
\frac{r}{r_k}
\Big(1+\frac{1}{\theta_k}\Big)|\partial_{w_2}\psi^\star(0,1-\frac{4\alpha r}{r_k\theta_k})|d\alpha dr+o(1)
\\
\leq&C\int_0^{r_k/2}\int_{-1}^1\theta_k
\Big(1+\frac{1}{\theta_k}\Big)|\partial_{w_2}\psi^\star(0,w_2)|dw_2dr+o(1)\\
\leq & C\int_0^{r_k/2}(\theta_k+1)dr+o(1)=o(1),
\end{aligned}
\end{equation}
where $o(1)\rightarrow0$ as $k\rightarrow +\infty$, and $C$ is a positive constant independent of $k$ which might change from line to line.

In $C_k^-\cap \sourcedisk_{r_k/2}$ we set, for $r\in [0,r_k/2)$, $\alpha\in[2\pi-\theta_k,2\pi)$, 
\begin{align*}
u_k(r,\alpha):=(u_{k1}(r,2\pi-\alpha),-u_{k2}(r,2\pi-\alpha)).
\end{align*}
 
Similar estimates as in \eqref{estimate_area_step4} 
for the area of the graph of $u_k$ hold on $C_k^- \cap \sourcedisk_{r_k/2}$.

\medskip

{\it Step 5.} Definition of $\veps$ on $\Teps$
and its area contribution.

We first construct $u_k$ on $\Teps \cap \{(r,\alpha): r\in [0,r_k/2],\alpha \in [\thetaeps, \thetaepsbar]\}$.
We define $\beta_k: [0,r_k/2]\times [\theta_k,\overline\theta_k]\rightarrow [0,\pi]$ as
\begin{align*}
\beta_k(r,\alpha):=\frac{\overline\theta_k-\alpha}{\overline\theta_k-\theta_k}\alpha_k(r)+(1-\frac{\overline\theta_k-\alpha}{\overline\theta_k-\theta_k})\big(\frac{2r}{r_k}(\overline\theta_k-\pi)+\pi\big),
\end{align*}
where 
$$\alpha_k(r):=\arccos\big(\Upsilon_{k1}(\frac{4r}{r_k}-1,\psi^\star_k(0,1-\frac{4r}{r_k}))\big), \qquad
r \in [0,r_k/2].
$$
Notice that $\alpha_k$ is decreasing and takes values in 
$[0,\pi]$.
Therefore we set
\begin{equation}\label{eq:u_k_conetto}
u_k(r,\alpha):=\big(\cos(\beta_k(r,\alpha)),\sin (\beta_k(r,\alpha))\big),
\qquad
(r,\alpha)\in [0,r_k/2]\times [\theta_k,\overline\theta_k].
\end{equation}
One checks that $\beta_k(r,\theta_k)= \alpha_k(r)$,
$\beta_k(r,\overline \theta_k) = \frac{2r}{r_k}(\overline \theta_k-\pi)+\pi$
(see also \eqref{eqn:vepsoutofthecone}), and   
\begin{align*}
&\alpha_k(r_k/2)=0,\\
&u_k(r_k/2,\alpha)=\big(\cos((1-\frac{\overline\theta_k-\alpha}{\overline\theta_k-\theta_k})\overline\theta_k),\sin((1-\frac{\overline\theta_k-\alpha}{\overline\theta_k-\theta_k})\overline\theta_k)\big),\\
&u_k(r,\theta_k)=\big(\cos(\alpha_k(r)),\sin (\alpha_k(r))\big)=\Upsilon_k(\frac{4r}{r_k}-1,\psi^\star_k(0,1-\frac{4r}{r_k})),\\
&u_k(r,\overline\theta_k)=\big(\cos(\frac{2r}{r_k}(\overline\theta_k-\pi)+\pi),\sin(\frac{2r}{r_k}(\overline\theta_k-\pi)+\pi)\big),
\end{align*}
so that $u_k$ is continuous on $\{\alpha \in \{\theta_k, \overline \theta_k\}, \;r\in [0,r_k/2]\} \cap \Omega$,
see \eqref{eqn:vepsboundaryinBeps} and \eqref{eq:u_k_lip_cont}.

Notice also that $u_k$ is continuous 
at $(0,0)\in \R^2$ and $u_k(0,0)=(-1,0)$.
Finally, since $u_k$ in \eqref{eq:u_k_conetto}
takes values in $\mathbb S^1$, the
 determinant of its Jacobian vanishes, 
so that in order to estimate the area contribution
of the graph of $u_k$ in $\Teps \cap \{(r,\alpha):r\in [0,r_k/2],\; \alpha \in [\thetaeps, \thetaepsbar]\}$ it is sufficient to estimate the 
gradient of $u_k$. We have
\begin{align*}
&|\partial_ru_k(r,\alpha)|= |\partial_r \beta_k(r,\alpha)|\leq |\partial_r\alpha_k(r)|+\frac{2\pi}{r_k},\\
&|\partial_\alpha u_k(r,\alpha)|=|\partial_\alpha\beta_k(r,\alpha)|\leq \frac{|\alpha_k(r)|}{\overline\theta_k-\theta_k}+\frac{\pi}{\overline\theta_k-\theta_k}\leq \frac{2\pi}{\overline\theta_k-\theta_k}.
\end{align*}
Therefore 
\begin{align}\label{estimate_area_step5a}
&\int_{0}^{r_k/2}\int_{\theta_k}^{\overline\theta_k}
r|\mathcal M(\nabla u_k)(r,\alpha)|d\alpha dr\leq \int_{0}^{r_k/2}
\int_{\theta_k}^{\overline\theta_k} 
\Big[\frac{r_k}{2}
(1+|\partial_r\beta_k(r,\alpha)|)+|\partial_\alpha \beta_k(r,\alpha)|\Big]
~d\alpha dr\nonumber\\
&\leq o(1)+
\int_{0}^{r_k/2}\int_{\theta_k}^{\overline\theta_k}\left(
\frac{r_k}{2}|\partial_r\alpha_k(r)|+\pi +\frac{2\pi }{\overline\theta_k-\theta_k}\right)d\alpha dr=o(1),
\end{align}
with $o(1)\rightarrow0$ as $k\rightarrow +\infty$. 
Notice that the integral of $|\partial_r\alpha_k(r)|$ with respect to $r$ can be computed via the fundamental integration theorem, since $\alpha_k$ 
is monotone.

In $T_k\cap \{(r,\alpha):  r\in [0,r_k/2],\alpha \in [2\pi-\thetaepsbar,2\pi-\thetaeps]\}$ we set
\begin{align*}
u_k(r,\alpha):=(u_{k1}(r,2\pi-\alpha),-u_{k2}(r,2\pi-\alpha)).
\end{align*}
We now define $u_k$ on $\Teps \cap \{(r,\alpha):  r\in(r_k/2,l),\alpha \in [\thetaeps, \thetaepsbar]\}$. We set
\begin{equation*}
u_k(r,\alpha):=\big(\cos((1-\frac{\overline\theta_k-\alpha}{\overline\theta_k-\theta_k})\overline\theta_k),\sin((1-\frac{\overline\theta_k-\alpha}{\overline\theta_k-\theta_k})\overline\theta_k)\big).
\end{equation*}
Then  $\veps \in \textrm{Lip}(\Teps,\Sone)$, and 
\begin{align*}
& \veps(r,\thetaeps)=(1,0),\qquad \veps(r,\thetaepsbar) = (\cos \thetaepsbar,\sin \thetaepsbar)\quad \text{ for  } r \in (\reps/2,l),
\\
&\partial_ru_k(r,\alpha)= 0,\\
&\partial_\alpha u_k(r,\alpha)=\frac{\overline \theta_k}{\overline \theta_k-\theta_k}\Big(-\sin((1-\frac{\overline\theta_k-\alpha}{\overline\theta_k-\theta_k})\overline\theta_k),\cos((1-\frac{\overline\theta_k-\alpha}{\overline\theta_k-\theta_k})\overline\theta_k)\Big).
\end{align*}
Hence 
\begin{align}\label{estimate_area_step5b}
&\int_{r_k/2}^l\int_{\theta_k}^{\overline\theta_k}
r|\mathcal M(\nabla u_k)(r,\alpha)|d\alpha dr\leq 
\int_{r_k/2}^l\int_{\theta_k}^{\overline\theta_k}\Big(
r+\frac{\overline \theta_k}{\overline \theta_k-\theta_k}\Big)~d\alpha dr=o(1)
\end{align}
as $k\rightarrow +\infty$. 

Finally in  
$T_k\cap \{(r,\alpha): r\in(r_k/2,l), \alpha \in [2\pi-\thetaepsbar,2\pi-\thetaeps]\}$ we set
\begin{align*}
u_k(r,\alpha) := (u_{k1}(r,2\pi-\alpha),-u_{k2}(r,2\pi-\alpha)).
\end{align*}
Similar estimates as in \eqref{estimate_area_step5a}, 
\eqref{estimate_area_step5b} for the area of the graph of $u_k$ hold on 
$T_k\cap \{(r,\alpha): r\in(0,r_k/2), \alpha \in [2\pi-\thetaepsbar,2\pi-\thetaeps]\}$,
$T_k\cap \{(r,\alpha): r\in(r_k/2,l), \alpha \in [2\pi-\thetaepsbar,2\pi-\thetaeps]\}$,
respectively. 

{\it Step 6.} We claim that  
\begin{equation}\label{eqn:integraloutofceps}
\int_{\BallR \setminus 
(C_k\cup T_k)
} \vert \M (\nabla \veps ) \vert dx \longrightarrow \int_\BallR \vert \M (\nabla \vmap)\vert  ~dx \qquad \text{ as }  k \to +\infty,
\end{equation}
where we recall that $C_k\cup T_k = \{(r,\alpha)\in \BallR: r\in[0,\longR), \alpha \in [0,\thetaepsbar]\cup[2\pi-\thetaepsbar,2\pi)\}.$ 

Indeed, on $\BallR \setminus (C_k\cup T_k)$
the maps $\veps$ and $\vmap$ take values in the circle $\Sone$, hence 
\begin{equation*}
\det (\nabla \veps)=0,\qquad \det (\nabla\vmap )
=0,\qquad \text{ in } \BallR \setminus (C_k\cup T_k).
\end{equation*}
Thus  
\begin{equation*}
\int_{\BallR \setminus (C_k\cup T_k)} 
\vert\M(\nabla \veps)-\M(\nabla \vmap)\vert ~ d x 
\leq \sum _{i=1,2}\int_{\BallR \setminus (C_k\cup T_k)} \vert \nabla (\veps_i - \vmap _i) \vert ~d x.
\end{equation*}
From \eqref{eqn:vepsoutofthecone} we have 
\begin{equation}\label{eqn:vepsminusvoutofthecone_rderiv}
\begin{aligned}
&\vert \partial_r (\veps -\vmap) \vert
   = 0 \qquad \textrm{ in }\quad\BallR \setminus (\Balleps \cup \Coneeps \cup \Teps),
\\
&\vert \partial_r (\veps -\vmap) \vert\leq \frac{\pi}{\reps} 
\ \quad \textrm{ in }\quad\Balleps\setminus(\Coneeps \cup \Teps),
\\
&\vert \partial_\alpha (\veps -\vmap) \vert
   = 0 \qquad \text{ in }\quad\BallR \setminus (\Balleps \cup \Coneeps \cup \Teps),
\\
&\vert \partial_\alpha (\veps -\vmap) 
\vert\leq 2 \qquad \textrm{ in }\quad\Balleps \setminus ( \Coneeps\cup \Teps).
\end{aligned}
\end{equation}
Our previous remarks and the fact that $\reps, \thetaeps , (\thetaepsbar -\thetaeps)\to0^+$ as $k \to +\infty$, imply \eqref{eqn:integraloutofceps}. 
\smallskip

{\it Step 7.} 
We know from \eqref{estimate_area_step3}, \eqref{estimate_area_step4}, \eqref{estimate_area_step5a}, and  \eqref{estimate_area_step5b}, that the integral of $\vert\mathcal M(\nabla u_k)\vert$ is infinitesimal as $k\rightarrow 
+\infty$, on the region $(\sourcedisk_{r_k}\cap C_k)\cup T_k$. Therefore it 
remains to compute the area of the graphs of $u_k$ in the region $C_k\setminus {\rm B}_{r_k}$.
We claim that this contribution gives
\begin{equation}\label{eq:this_contribution_will_be}
\lim_{k \to +\infty}\int_{\Coneeps\setminus\Balleps} 
\vert \M (\nabla \veps ) \vert~ dx \leq 2
\FBl(\hstar,\psionestar)=\mathcal A(\psionestar,\Omegahstar).
\end{equation}
To prove this, we start to compute the area of the graph of $\veps$ restricted to $C_k^+\setminus \Balleps$.
From \eqref{vepsonCeps}, \eqref{eqn:tepsderiv}, \eqref{eqn:sepsrderiv} and \eqref{eqn:sepsthetaderiv}, we have
\begin{equation}\label{eq:comput_first_derivatives}
\begin{aligned}
&\partial_r \veps_1=\Big(\frac{\alpha}{\thetaeps}-1\Big)
\teps' 
\hstark'
=
\frac{\longR}{\longR -\reps} 
\Big(\frac{\alpha}{\thetaeps}-1\Big) 
\hstark',
\\
& 
\partial_\alpha \veps_1=\frac{1 + \hstark}{\thetaeps},
\\
&\partial_r \veps_2=
\teps'\Big[\Big(1-\frac{\alpha}{\thetaeps}\Big) 
\hstark' \partial_\scoord \psionekstar 
+ \partial_\axialcoordofcylinder \psionekstar\Big]=
\frac{\longR}{\longR -\reps} 
\Big[ \Big(1-\frac{\alpha}{\thetaeps}\Big) 
\hstark'\partial_\scoord \psionekstar + \partial_\axialcoordofcylinder \psionekstar\Big],
\\
&\partial_\alpha \veps_2
=
-\Big[\frac{1+ \hstark}{\thetaeps}\Big] 
\partial_\scoord\psionekstar,
\\
& 
\partial_r \veps_1
\partial_\alpha \veps_2 - 
\partial_\alpha \veps_1
\partial_r \veps_2 = - \left(
\frac{1+\hstark}{\thetaeps}
\right)\frac{\longR}{\longR -\reps} 
\partial_\axialcoordofcylinder\psionekstar,
\end{aligned}
\end{equation}
where $\hstark'$ denotes the 
derivative of $\hstark$ with respect
to $\axialcoordofcylinder$, 
 $\hstark, \hstark'$ 
are evaluated at 
$\teps(r)$, and the two partial derivatives 
$\partial_\scoord\psionekstar$, 
$\partial_\axialcoordofcylinder \psionekstar$
of $\psionekstar$ with respect to $\scoord, \axialcoordofcylinder$ 
are evaluated at $(\teps(r),
-\seps(r,\alpha))$. Note carefully
that, in the computation of the Jacobian, the terms containing $\partial_\scoord \psionekstar$ cancel each other.

Since $\hstark$ is convex, its derivative is nonincreasing, and therefore
$\int_{\reps}^{\longR} \vert \hstark'\vert~dr < +\infty$.
As a consequence of \eqref{eq:comput_first_derivatives}, from \eqref{eqn:areapolarexpression}, we have 
\begin{equation*}
\begin{aligned}
& \area (\veps,C_k^+\setminus \Balleps)
\\
= &
\int_{\reps}^{\longR}\int_{0}^{\thetaeps}r
 \Bigg\{
1
+\left(\frac{\longR}{\longR-\reps}\right)^2
\left(\frac{\alpha}{\thetaeps}-1\right)^2(\hstark')^2
\\
& +\left(\frac{\longR}{\longR-\reps}\right)^2
\left[
\big(\frac{\alpha}{\thetaeps}-1\big)^2(\hstark')^2(\partial_{\scoord}
\psionekstar)^2 
+
2\big(1-\frac{\alpha}{\thetaeps}\big)\hstark'
\partial_{\scoord} \psionekstar
\partial_{\axialcoordofcylinder} \psionekstar
+ (\partial_{\axialcoordofcylinder} \psionekstar)^2
\right]
\\
&
 +\frac{1}{r^2}\left(
\frac{1+ \hstark}{\thetaeps}
\right)^2
\left(
1+(\partial_{\scoord}\psionekstar)^2
+
\left(\frac{\longR}{\longR-\reps}\right)^2
(\partial_{\axialcoordofcylinder} \psionekstar)^2
\right)\Bigg\}^\frac{1}{2}
~  dr d\alpha,
\end{aligned}
\end{equation*}
where 
$\partial_\scoord \psionekstar$, $\partial_{\axialcoordofcylinder}
\psionekstar$ 
are evaluated at $(\teps(r),-\seps(r,\alpha))$, 
and $\hstark$, $\hstark'$ are evaluated at $\teps(r)$.
Now we use the change of variable \eqref{eqn:changeofvariableT}:
from \eqref{eqn:changeofvariabledeterminant}, we have
\begin{equation*}
\begin{aligned}
& \area (\veps,C_k^+\setminus \Balleps)
\\
=&
\int_{0}^{\longR}
\int^{\hstark(\axialcoordofcylinder)}_{-1}
 \left( \frac{\longR-\reps}{\longR}\right)
 \left( \frac{\thetaeps}{1+\hstark}\right)
 \invr(\axialcoordofcylinder)
 \Bigg\{
1 
+
\big(\frac{\longR}{\longR-\reps}\big)^2
\left(\frac{\invtheta(\axialcoordofcylinder,\scoord)}{\thetaeps}-1\right)^2(\hstark')^2 
\\
&
+\left(\frac{\longR}{\longR-\reps}\right)^2
\Big[
\big(1-\frac{\invtheta(\axialcoordofcylinder,\scoord)}{\thetaeps}\big)^2
(\hstark')^2(\partial_{\scoord} \psionekstar)^2 
+
2\big(1-\frac{\invtheta(\axialcoordofcylinder,\scoord)}{\thetaeps}\big)
\hstark' \partial_{\scoord}\psionekstar
\partial_{\axialcoordofcylinder}\psionekstar
+
(\partial_{\axialcoordofcylinder}\psionekstar)^2
\Big]
\\
&+ \frac{1}{(\invr(\axialcoordofcylinder))^2}
\big(\frac{1+ \hstark}{\thetaeps}\big)^2\Big( 1
+(\partial_{\scoord}\psionekstar)^2
+
\big(\frac{\longR}{\longR-\reps}\big)^2
(\partial_{\axialcoordofcylinder}
\psionekstar
)^2\Big)
\Bigg\}^\frac{1}{2}
~  d\scoord d\axialcoordofcylinder,
\end{aligned}
\end{equation*}
where $\invr(\axialcoordofcylinder)$, 
$\invtheta(\axialcoordofcylinder,\scoord)$ 
are defined in \eqref{eqn:rfn}, \eqref{eqn:thetafn}, $\hstark'$ is evaluated at $w_1$, and $\partial_{w_1} \psi^\star_k$ and $\partial_{w_2} \psi^\star_k$ are evaluated at $(w_1,w_2)$.
Therefore
\begin{equation}\label{last_area}
 \area (\veps,C_k^+\setminus \Balleps)
= 
\int_{0}^{\longR}\int^{\hstark(\axialcoordofcylinder)}_{-1}
\Big\{\textrm{I}_k
+
\textrm{II}_k
+
\textrm{III}_k
+
\textrm{IV}_k
+
\textrm{V}_k
+
\textrm{VI}_k
\Big\}^{\frac{1}{2}} ~d\scoord d\axialcoordofcylinder,
\end{equation}
where
\begin{equation*}
\begin{cases}
\textrm{I}_k = 
\left( \frac{\longR-\reps}{\longR}\right)^2
 \left( \frac{\thetaeps}{1+\hstark}\right)^2(\invr(\axialcoordofcylinder))^2,
\\
\\
\textrm{II}_k = 
 \left( \frac{\thetaeps}{1+\hstark}\right)^2
 \left(1-\frac{\invtheta(\axialcoordofcylinder,\scoord)}{\thetaeps}\right)^2
(\invr(\axialcoordofcylinder))^2(\hstark')^2,
\\
\\
\textrm{III}_k = 
\left( \frac{\thetaeps}{1+\hstark}\right)^2(\invr(\axialcoordofcylinder))^2
\Big[
\big(1-\frac{\invtheta(\axialcoordofcylinder,\scoord)}{\thetaeps}
\big)^2(\hstark')^2(\partial_{\scoord}\psionekstar)^2 
\\
\\
\qquad \qquad 
\qquad \qquad 
\qquad \qquad 
+2\big(1-\frac{\invtheta(\axialcoordofcylinder,\scoord)}{\thetaeps}\big)
\hstark' \partial_{\scoord}\psionekstar
\partial_{\axialcoordofcylinder}
\psionekstar
+
(\partial_{\axialcoordofcylinder}
\psionekstar)^2
\Big],
\\
\\
\textrm{IV}_k  =
\big( \frac{\longR-\reps}{\longR}\big)^2,
\\
\\
\textrm{V}_k  =
\big( \frac{\longR-\reps}{\longR}\big)^2
(\partial_\scoord \psionekstar)^2,
\\ 
\\
\textrm{VI}_k  =
(\partial_{\axialcoordofcylinder}
\psionekstar
)^2.
\end{cases}
\end{equation*}
Since 
$\lim_{k\to \infty} \frac{l-\reps}{l}=1$
and $\lim_{k \to +\infty} \thetaeps=0$, we deduce
from  \eqref{eqn:rfn}, \eqref{eqn:thetafn}, 
\begin{align*}
\lim_{k \to +\infty}\invr(\axialcoordofcylinder) 
= \axialcoordofcylinder, \qquad  
\lim_{k \to +\infty}
\frac{\invtheta(\axialcoordofcylinder,\scoord)}{\thetaeps} = 
\frac{\hstar(\axialcoordofcylinder)-\scoord}{1+\hstar(\axialcoordofcylinder)}.
\end{align*}
Therefore we see that 
$$\int_{0}^{\longR}\int^{\hstark(\axialcoordofcylinder)}_{-1}(\textrm{I}_k)^{\frac12}+(\textrm{II}_k)^{\frac12}dw_2dw_1=o(1),$$
as $k\rightarrow +\infty$.
Moreover 
\begin{equation}\label{eq:III_k}
\int_{0}^{\longR}\int^{\hstark(\axialcoordofcylinder)}_{-1}(\textrm{III}_k)^{\frac12}dw_2dw_1=o(1)
\end{equation}
as $k\rightarrow +\infty$.
Indeed 
we may estimate
$$\int_{0}^{\longR}\int^{\hstark(\axialcoordofcylinder)}_{-1}(\textrm{III}_k)^{\frac12}dw_2dw_1\leq C\theta_k\int_{0}^{\longR}\int^{\hstark(\axialcoordofcylinder)}_{-1}|\hstark'(w_1)||\partial_{w_2}\psi^\star_k(w_1,w_2)|+|\partial_{w_2}\psi^\star_k(w_1,w_2)|dw_2dw_1,$$
and using that $|\hstark'(w_1)|\leq 2k$ (see \eqref{def_h_k}), 
if we assume \eqref{eq:k_theta_k_to_zero}, {\it i.e.}, 
$\theta_k k\rightarrow0$, then 
\eqref{eq:III_k} follows, 
since the $BV$-norm of $\psi_k^\star$ is bounded uniformly with respect to  $k$.

Hence, from \eqref{last_area},
\begin{align}\label{eq:almost_the_end}
\area (\veps,C_k^+\setminus \Balleps)
&\leq
\int_{0}^{\longR}\int^{\hstark(\axialcoordofcylinder)}_{-1}
\Big\{
\textrm{IV}_k
+
\textrm{V}_k
+
\textrm{VI}_k
\Big\}^{\frac{1}{2}} ~d\scoord d\axialcoordofcylinder+o(1)\nonumber\\
&\leq\int_0^\longR \int^{\hstark(\axialcoordofcylinder)}_{-1}
\sqrt{1+(\partial_{\axialcoordofcylinder}
	\psionekstar)^2 +(\partial_{w_2}
	\psionekstar)^2}~d\scoord dw_1+o(1)\nonumber\\
&=\mathcal A(\psi^\star_k, SG_{h^\star}\cap R_l)+o(1)=\frac12\mathcal A(\psi^\star_k, SG_{h^\star})+o(1)
\end{align} 
as $k\rightarrow +\infty$.
Then taking the limit as $k \to +\infty$ in \eqref{eq:almost_the_end}, and using Lemma \ref{lem:properties_of_psi_m} (iii),  we get 
\begin{equation}\label{eq:step_8}
\lim_{k \to +\infty} \area(\veps, C_k^+\setminus \Balleps)
 \leq\mathcal A(\psionestar, \Omegahstar)=\FB(\hstar,\psionestar),
\end{equation}
where the last equality follows from \eqref{eqn:FAequalareaofhstar}.

\smallskip
{\it Step 8.} Conclusion.
Notice that $\veps \in {\rm Lip}(\Omega, \R^2)$,
and $u_k \to \vmap$  in $L^1(\BallR,\R^2)$. 
Inequality 
\eqref{eq:upper_bound_recovery} follows from 
\eqref{eqn:integraloutofceps} (which gives the term $\int_\Omega|\mathcal M(\nabla u)|dx$), from  \eqref{eq:this_contribution_will_be} (which gives the second term in \eqref{eq:upper_bound_recovery}), and from estimates \eqref{estimate_area_step3}, \eqref{estimate_area_step4}, \eqref{estimate_area_step5a}, and \eqref{estimate_area_step5b}, showing that all the other contributions are negligible. 

\section*{Acknowledgements}
The first and third authors acknowledge the support
of the INDAM/GNAMPA.
The first two authors are
grateful to ICTP (Trieste), where part of this paper was written.
The first and third authors also acknowledge the partial financial support of the F-cur project number 2262-2022-SR-CONRICMIUR$_{-}$PC-FCUR2022$_{-}002$ of the University of Siena, and the of the PRIN project 2022PJ9EFL "Geometric Measure Theory: Structure of Singular
Measures, Regularity Theory and Applications in the Calculus of Variations'', PNRR Italia Domani, funded
by the European Union via the program NextGenerationEU, CUP B53D23009400006. 

\addcontentsline{toc}{section}{References}

\end{document}